\newenvironment{proof}{{\noindent \bf Proof.}}{\hfill$\Box$\medskip}
\newtheorem{theorem}{Theorem}[section]
\newtheorem{corollary}[theorem]{Corollary}
\newtheorem{lemma}[theorem]{Lemma}
\newtheorem{remark}{Remark}
\def \b{\beta}
\def \G{\Gamma}
\def \d{\delta}
\def \w{\omega}
\def \S{\Sigma}
\def \R{\mathbb{R}}
\def \Z{\mathbb{Z}}
\def \N{\mathbb{N}}
\def \A{\mathcal{A}}
\begin{document}

\title{ The exceptional sets on the run-length function  of beta-expansions\footnotetext {* Corresponding author}
\footnotetext {2010 AMS Subject Classifications: 11K55, 28A80}}
\author{  Lixuan Zheng$^\dag$, Min Wu$^\dag$ and Bing Li$^{\dag, *}$\\
\small \it $\dag$ Department of Mathematics\\
\small \it South China University of Technology\\
\small \it Guangzhou 510640, P.R. China\\
\small \it E-mails: z.lixuan@mail.scut.edu.cn, wumin@scut.edu.cn and
scbingli@scut.edu.cn}
\date{5th March,2015}
\date{}
\maketitle
\begin{center}
\begin{minipage}{120mm}{\small {\bf Abstract.} Let $\beta > 1$ and the run-length function $r_n(x,\beta)$ be the maximal length of consecutive zeros amongst the first n digits in the $\beta$-expansion of $x\in[0,1]$. The exceptional set $$E_{\max}^{\varphi}=\left\{x \in [0,1]:\liminf_{n\rightarrow \infty}\frac{r_n(x,\beta)}{\varphi(n)}=0, \limsup_{n\rightarrow \infty}\frac{r_n(x,\beta)}{\varphi(n)}=+\infty\right\}$$ is investigated,  where  $\varphi: \mathbb{N} \rightarrow \R^+$ is a monotonically increasing function  with $\lim\limits_{n\rightarrow \infty }\varphi(n)=+\infty$. We prove that the set $E_{\max}^{\varphi}$ is either empty or of full Hausdorff dimension and residual in $[0,1]$ according to the increasing rate of $\varphi$ .}
\end{minipage}
\end{center}

\vskip0.5cm {\small{\bf Key words and phrases} beta-expansion; exceptional set; Huasdorff dimension; residual }\vskip0.5cm

\section{Introduction}
For a real number $\b > 1$. Let $T_{\b}:(0,1] \rightarrow (0,1] $ be a \emph{$\b$-transformation} which is given by $$T_{\b}x = \b x-\lceil\b x\rceil + 1,$$ where $\lceil x \rceil$ means the smallest integer which is larger than $x$. By the iteration of $T_{\beta}$ (see \cite{R}), we have already known that every $x \in (0, 1]$ can be expressed as:
\begin{equation}\label{1.1}
x=\frac{\varepsilon_1(x,\beta)}{\b}+\cdots+\frac{\varepsilon_n(x,\beta)}{\beta^n}+\cdots,
\end{equation}
where, for each $n\geq1$,$$\varepsilon_n(x,\beta)=\lceil \beta T_{\beta}^{n-1}x
\rceil-1$$ which is said to be the $n$-th digit of $x$ with respect to the base $\beta$. The digit sequence is denoted by $$\varepsilon(x,\beta):=(\varepsilon_1(x,\beta), \ldots,\varepsilon_n(x,\beta),\ldots)$$and called the \emph{$\beta$-expansion} of $x$. For the sake of simplicity, we set the $\beta$-expansion of $0$ as $\varepsilon(0,\beta)=(0,0,0,\ldots).$

For every real number $x\in [0,1]$, and every integer $n\geq 1$, the {\it run-length function}, denoted by $r_n(x,\beta)$, is the maximal length of consecutive zeros amongst $\varepsilon(x,\beta)=(\varepsilon_1(x,\beta),\ldots,\varepsilon_n(x,\beta))$, that is, $$r_n(x,\beta)=\max\{j\geq 1: \varepsilon_{i+1}(x,\beta)=\cdots=\varepsilon_{i+j}(x,\beta)=0 \ {\rm for\ \ some}\ 0\leq i \leq n-j\}.$$ We set $r_n(x,\beta)=0$ if such $j$ does not exist. For the base $\beta=2$, Erd\"{o}s and R\'{e}nyi \cite{ER} showed that for Lebesgue almost all $x\in[0,1]$, we can obtain that
\begin{equation}\label{1}
\lim_{n\rightarrow \infty}\frac{r_n(x,2)}{\log_2n}=1.
\end{equation}
The size of sets about the function $r_n(x,2)$ has raised much attention. Ma et al. \cite{MW} proved that the set of points violating (\ref{1}) is of full dimension. Moreover, Li and Wu replaced the function $\log_2 n$ in (\ref{1}) by a monotonically increasing function $\varphi: \N \rightarrow \R^+$ with $\lim\limits_{n\rightarrow \infty }\varphi(n)=+\infty,$ and they introduced the exceptional set which contains those "worst" divergence point as
$$E_{\max}=\left\{x \in [0,1]:\liminf_{n\rightarrow \infty}\frac{r_n(x,2)}{\varphi(n)}=0,\ \limsup_{n\rightarrow \infty}\frac{r_n(x,2)}{\varphi(n)}=+\infty\right\}.$$ They first got a weaker conclusion that the Hausdorff dimension of $E_{\max}$ is 1 and $E_{\max}$ is residual in $[0,1]$ when the monotonically increasing function $\varphi(n)$ satisfying $\lim\limits_{n\rightarrow \infty }\frac{n}{\varphi(n^{1+\alpha})}=+\infty$ for some $0<\alpha\leq 1$, see \cite{LW1} for more details. After that, in \cite{LW2}, they showed that the $E_{\max}$ has Hausdorff dimension 1 and is of residue under the condition  $\lim\limits_{n\rightarrow \infty }\frac{n}{\varphi(n)}=+\infty$. If we let $\varphi(n)=\log_2 n$ in $E_{\max}$, the result that  $\dim_{\rm H}E_{\max}=1$ in \cite{LW1} is somewhat surprising since this set is much smaller than the set which was studied by Ma et al. \cite{MW}. Naturally, it is of interest to consider whether the above properties will be true if  2 is substituted for a general real number $\beta>1$. As a matter of fact, Tong et al. \cite{TYZ} gave a similar result as \cite{ER} that for Lebesgue almost all $x\in[0,1]$, we have \begin{equation}\label{2}
\lim_{n\rightarrow \infty}\frac{r_n(x,\beta)}{\log_\beta n}=1.
\end{equation}
Thus, the set $E=\left\{x \in [0,1]:\liminf\limits_{n\rightarrow \infty}\frac{r_n(x,\beta)}{\log_\beta n}<\limsup\limits_{n\rightarrow \infty}\frac{r_n(x,\beta)}{\log_\beta n}\right\}$ has null Lebesgue measure. By using the monotonically increasing function $\varphi: \N \rightarrow \R^+$ with $\lim\limits_{n\rightarrow \infty }\varphi(n)=+\infty$ instead of the function $\log_\beta n$, we take the exceptional set containing the "worst" divergence points as follows into consideration, that is,
\begin{equation}\label{set}
E_{\max}^{\varphi}=\left\{x \in [0,1]:\liminf_{n\rightarrow \infty}\frac{r_n(x,\beta)}{\varphi(n)}=0,\ \limsup_{n\rightarrow \infty}\frac{r_n(x,\beta)}{\varphi(n)}=+\infty\right\}.
\end{equation} We extend Li and Wu's results (see\cite{LW1,LW2}) by generalizing the base $2$ into a real number $\beta>1$, which can be expressed as the following theorems. Although the following result is similar as Li and Wu's results, we use a different and simpler way to obtain the Hausdorff dimension of the set $E_{\max}^{\varphi}$.
\begin{theorem}\label{th1}
Let $\varphi: \N \rightarrow \R^+$ with $\lim\limits_{n\rightarrow \infty }\varphi(n)=+\infty$. Let $E_{\max}^{\varphi}$ be the set defined as (\ref{set}), then

(1) If\ $\limsup\limits_{n\rightarrow \infty}\frac{n}{\varphi(n)}<+\infty$, we have $E_{\max}^{\varphi}=\emptyset$;

(2) If\ $\limsup\limits_{n\rightarrow \infty}\frac{n}{\varphi(n)}=+\infty$, we have $\dim_{\rm{H}}E_{\max}^{\varphi}=1$.
\end{theorem}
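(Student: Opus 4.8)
\medskip
\noindent\textbf{Outline of the intended proof.}

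Part (1) is immediate from the trivial bound $r_n(x,\beta)\le n$: if $\limsup_{n\to\infty}\frac{n}{\varphi(n)}=M<+\infty$, then $\limsup_{n\to\infty}\frac{r_n(x,\beta)}{\varphi(n)}\le M<+\infty$ for every $x\in[0,1]$, so the second requirement in (\ref{set}) fails for all $x$ and $E_{\max}^{\varphi}=\emptyset$. For part (2), since $E_{\max}^{\varphi}\subseteq[0,1]$ already gives $\dim_{\rm H}E_{\max}^{\varphi}\le1$, it suffices to produce, for each $\e\in(0,1)$, a closed subset $F=F_\e\subseteq E_{\max}^{\varphi}$ with $\dim_{\rm H}F\ge 1-\e$.

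The plan is to realize $F$ as a Cantor set inside the $\beta$-shift whose points have digit sequences built by alternating long \emph{free blocks} with short \emph{zero blocks} $0^{\ell_k}$. Using $\limsup_{n\to\infty}\frac{n}{\varphi(n)}=+\infty$ I first extract an increasing sequence $(a_k)$ with $a_k/\varphi(a_k)>k^4$, and set $\ell_k=\lceil\sqrt{a_k\varphi(a_k)}\,\rceil$, so that $\ell_k/\varphi(a_k)=\sqrt{a_k/\varphi(a_k)}\to+\infty$ while $\ell_k/a_k=\sqrt{\varphi(a_k)/a_k}\to0$. I arrange the construction so that the $k$-th zero block ends exactly at position $a_k$ and the following free block occupies positions $a_k+1$ up to $b_k:=a_{k+1}-\ell_{k+1}$; choosing each $a_{k+1}$ large enough once $\ell_k$ has been fixed (possible since $\varphi\to\infty$), I may also require $a_{k+1}>2a_k$, $\varphi(\lfloor a_{k+1}/2\rfloor)>k\,\ell_k$, and $\sum_{j\le k}\ell_j=o(a_k)$. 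If the free blocks are designed to contain no run of more than a fixed length $L=L(\e)$ of consecutive zeros, then for every $x\in F$: at $n=a_k$ the $k$-th zero block lies among the first $n$ digits, so $r_{a_k}(x,\beta)\ge\ell_k$ and $r_{a_k}(x,\beta)/\varphi(a_k)\ge\sqrt{a_k/\varphi(a_k)}\to+\infty$; and at $n=b_k$ (which is $\ge a_{k+1}/2$ for large $k$) the longest run among the first $n$ digits equals $\max(\ell_k,L-1)=\ell_k$ eventually, so $r_{b_k}(x,\beta)/\varphi(b_k)\le\ell_k/\varphi(\lfloor a_{k+1}/2\rfloor)<1/k\to0$. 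Hence $\liminf_n r_n(x,\beta)/\varphi(n)=0$ and $\limsup_n r_n(x,\beta)/\varphi(n)=+\infty$, that is $F\subseteq E_{\max}^{\varphi}$.

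For the dimension I would fill the free blocks by freely concatenating \emph{full} admissible words of a fixed large length $m=m(\e)$ that moreover contain no run of $L$ zeros and neither begin nor end with a run of $\lceil L/2\rceil$ zeros, so that concatenations keep the run bound. The needed facts about the $\beta$-shift are standard: full words concatenate to full words; $0^{\ell}w$ and $w0^{\ell}$ are again full whenever $w$ is (appending or prepending zeros is harmless, since the first digit of the $\beta$-expansion of $1$ is $\ge1$); and the number of such restricted full words of length $m$ is at least $\beta^{(1-\e)m}$ once $L$ and $m$ are large, because the fraction of length-$m$ full words containing a run of $L$ zeros tends to $0$ as $L\to\infty$ uniformly in $m$. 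Consequently every cylinder occurring in the construction has length comparable, up to a bounded factor, to $\beta^{-n}$ (it contains a full cylinder of level $n+O(1)$), and the measure $\mu$ distributing mass uniformly over the available choices satisfies $\mu([\varepsilon_1,\dots,\varepsilon_n])=\beta^{-(1-\e)F_n+o(n)}$, where $F_n$ is the number of positions in $\{1,\dots,n\}$ not lying in a zero block. Since $\sum_{j\le k}\ell_j=o(a_k)$ and $\ell_{k+1}/a_{k+1}\to0$, the zero positions have density $0$, so $F_n/n\to1$; the mass distribution principle then gives $\dim_{\rm H}F\ge 1-\e$, and letting $\e\to0$ yields $\dim_{\rm H}E_{\max}^{\varphi}\ge1$, hence $=1$.

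The main obstacle is not a single hard idea but the metric bookkeeping intrinsic to $\beta$-expansions: one must phrase the whole construction through full cylinders so that diameters stay comparable to $\beta^{-n}$, check that inserting the zero blocks and forbidding long zero-runs preserves fullness while keeping the exponential growth rate of the admissible words arbitrarily close to $\log\beta$, and choose the scales $a_k$ and $\ell_k$ recursively so that the $\liminf$, the $\limsup$, and the zero-density condition hold simultaneously. Each of these steps is routine, but together they carry the weight of the argument.
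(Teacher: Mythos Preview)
Your strategy coincides with the paper's: build a Cantor subset of $E_{\max}^{\varphi}$ by alternating ``free'' blocks assembled from full words with prescribed zero blocks, verify membership in $E_{\max}^{\varphi}$ along a sparse sequence of scales, and finish with the mass distribution principle on full cylinders. The differences are purely organizational. You make the zero blocks have asymptotically vanishing density ($\ell_k=\sqrt{a_k\varphi(a_k)}=o(a_k)$) and aim directly at $\dim_{\rm H}F\ge 1-\e$; the paper instead gives the zero block at stage $2j+1$ positive density $1/p$ (length about $n_{2j+1}/p$), obtains $\dim_{\rm H}E_p\ge (p-1)/p$, and sends $p\to\infty$ at the end. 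To keep runs short in the free blocks you forbid $0^L$ and impose boundary constraints; the paper instead concatenates full words of growing length $d_k=\lfloor\log n_k\rfloor$ that \emph{begin with the digit $1$}, so separators come for free and internal runs are bounded by $d_k=o(\varphi(n_k))$ automatically. Your route is slightly more direct conceptually but incurs the extra obligation of counting full words avoiding $0^L$; the paper's route avoids that count at the price of the auxiliary parameter $p$.

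One point to fix: your justification ``the fraction of length-$m$ full words containing a run of $L$ zeros tends to $0$ as $L\to\infty$ uniformly in $m$'' is false---for fixed $L$ that fraction tends to $1$ as $m\to\infty$. What is true, and what you actually need, is that the \emph{exponential growth rate} of full words of length $m$ avoiding $0^L$ tends to $\log\beta$ as $L\to\infty$; equivalently, for each $\e>0$ there exist $L$ and $m_0$ with at least $\beta^{(1-\e)m}$ such words for all $m\ge m_0$. The cleanest way to obtain this (and to dispense with your boundary conditions altogether) is to mimic the paper: build the free blocks as concatenations of full words of a fixed length $q$ that begin with $1$; there are at least $\beta^{q-h}/q$ of these, no internal run exceeds $q-1$, and taking $q$ large gives growth rate $\ge(1-\e)\log\beta$. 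Also note the harmless boundary effect you elided: at $n=b_k$ the longest run is $\ell_k+O(L)$ rather than exactly $\ell_k$, which does not affect the limit.
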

\begin{remark}\label{re1}
The result of (1) in Theorem \ref{th1} is obvious since if $\limsup\limits_{n\rightarrow \infty}\frac{n}{\varphi(n)}<+\infty$, the fact that $r_n(x,\beta)\leq n$ for all $n\geq 1$ and $x\in[0,1]$ gives that $$\limsup_{n\rightarrow \infty}\frac{r_n(x,\beta)}{\varphi(n)}\leq \limsup\limits_{n\rightarrow \infty}\frac{n}{\varphi(n)}<+\infty.$$ Thus, $E_{\max}^{\varphi}=\emptyset$. So we only need to show (2) in Theorem \ref{th1} in this paper.
\end{remark}

It occurs naturally to know how large the set $E_{\max}^{\varphi}$ is in the topological sense which is another method of describing the size of a set. In topology, the notion of residual set is usually used to describe a set being large. In a metric space $X$, a set $R$ is said to be \emph{residual} if its complement is of the first category. We can get from \cite{JC} that in a complete metric space a set is residual if it contains a dense $G_\d$ set, that is, a countable intersection of open sets. The Baire category theorem \cite{JC} is an important tool in general topology and functional analysis. There are many results showing that there are some sets which are negligible in the sense of measure theory but can be large from the topological viewpoint, some interesting examples can be found in \cite{PT,BO,H,M,MP,O}. Now we establish the following theorem.
\begin{theorem}\label{th2}
Let $\varphi: \N \rightarrow \R^+$ with $\lim\limits_{n\rightarrow \infty }\varphi(n)=+\infty$. Let $E_{\max}^{\varphi}$ be the set defined as (\ref{set}), we have $E_{\max}^{\varphi}$ is residual when  $\limsup\limits_{n\rightarrow \infty}\frac{n}{\varphi(n)}=+\infty$.
\end{theorem}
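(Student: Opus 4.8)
\medskip
\noindent\textbf{Proof sketch.} The plan is to exhibit $E_{\max}^{\varphi}$ as a countable intersection of sets, each containing a dense open subset of $[0,1]$, and then to invoke the Baire category theorem. We work with the usual coding of $[0,1]$ by admissible words (as in the analysis underlying Theorem~\ref{th1}): for an admissible word $(\varepsilon_1,\ldots,\varepsilon_n)$ the cylinder $I_n(\varepsilon_1,\ldots,\varepsilon_n)$ is a nondegenerate interval on whose interior the first $n$ digits, and hence the function $x\mapsto r_n(x,\beta)$, are constant, with $r_n$ there equal to the length of the longest run of zeros in $(\varepsilon_1,\ldots,\varepsilon_n)$; admissibility is governed lexicographically by the quasi-greedy expansion $\varepsilon^{*}(1,\beta)=(t_1,t_2,\ldots)$, which satisfies $t_1\ge 1$ and is not eventually $0$. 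For $k,M,N\in\N$ set
\[
A_{k,N}=\bigl\{x:\ \exists\, n\ge N,\ r_n(x,\beta)<\varphi(n)/k\bigr\},\qquad
B_{M,N}=\bigl\{x:\ \exists\, n\ge N,\ r_n(x,\beta)>M\varphi(n)\bigr\}.
\]
Unwinding the definitions of the lower and upper limits shows $E_{\max}^{\varphi}=\bigcap_{k,N}A_{k,N}\cap\bigcap_{M,N}B_{M,N}$, so it suffices to prove that each of these countably many sets contains a dense open set.

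For $B_{M,N}$ the argument is short: appending zeros to an admissible word preserves admissibility, so for any cylinder $I_m(w)$ and any $L\ge 1$ the cylinder $I_{m+L}(w\,0^{L})$ is a nonempty sub-cylinder of $I_m(w)$ on whose interior $r_{m+L}(\cdot,\beta)\ge L$. Since $\limsup_{n\to\infty}n/\varphi(n)=+\infty$ while $m$ is fixed and $\varphi(n)\to\infty$, one can choose $n\ge N$ with $n-m>M\varphi(n)$; with $L=n-m$ the open set $\mathrm{int}\,I_n(w\,0^{L})$ lies in $B_{M,N}$ and meets $I_m(w)$. Ranging over all cylinders, the union of these interiors is a dense open subset of $B_{M,N}$.

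The set $A_{k,N}$ is the delicate part, and carries the real content. Here one must find, inside every cylinder, a sub-cylinder on which $r_n(\cdot,\beta)$ is small compared to $\varphi(n)$ for some $n\ge N$; since $\varphi(n)\to\infty$, it suffices to produce, for arbitrarily large $n$, an admissible extension of the given word whose run-length is bounded by a constant depending only on that word and on $\beta$. This uses two facts. (i) There is a finite word $w_\beta$ depending only on $\beta$ --- for instance $w_\beta=(t_1\cdots t_{q-1}(t_q-1))$ for a suitably chosen index $q$ with $t_q\ge 1$ --- such that $w_\beta^{\infty}$ is admissible and all of its runs of zeros have length at most a constant $C_\beta$; this is a classical feature of $\beta$-shifts. (ii) For every admissible word $w$ there is a finite integer $\ell_0(w)$ such that $w\,0^{\ell}\,a$ is admissible for every $\ell\ge\ell_0(w)$ and every admissible sequence $a$ --- a long enough run of zeros absorbs the lexicographic (Parry) constraint at the junction, the relevant point being that only finitely many prefixes of $\varepsilon^{*}(1,\beta)$ occur as suffixes of $w$ and that $\varepsilon^{*}(1,\beta)$ is not eventually $0$. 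Granting (i)--(ii): given a cylinder $I_m(w)$ whose defining word has longest zero-run $R$, the word $w\,0^{\ell_0(w)}\,w_\beta^{\infty}$ is admissible and each of its prefixes has longest zero-run at most $C(w):=R+\ell_0(w)+C_\beta$; choosing $n\ge N$ with $\varphi(n)>k\,C(w)$ (possible as $\varphi(n)\to\infty$), its length-$n$ prefix $w'$ gives a cylinder $I_n(w')\subseteq I_m(w)$ with $r_n(\cdot,\beta)\le C(w)<\varphi(n)/k$ on its interior, so $\mathrm{int}\,I_n(w')\subseteq A_{k,N}$. Ranging over all cylinders produces a dense open subset of $A_{k,N}$. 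Since $[0,1]$ is a complete metric space, Baire's theorem then shows that $E_{\max}^{\varphi}$ contains a countable intersection of dense open sets, hence a dense $G_\delta$, and is therefore residual. The main obstacle in turning this outline into a proof is fact~(ii): the combinatorial bookkeeping ensuring admissibility across the concatenation junctions, which is of the same flavour as the cylinder constructions in the proof of Theorem~\ref{th1}.
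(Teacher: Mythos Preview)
Your proof is correct and takes a genuinely different route from the paper's. The paper constructs a single explicit dense $G_\delta$ subset $U\subset E_{\max}^{\varphi}$ of the form
\[
U=\bigcap_{n\ge 1}\bigcup_{k\ge n}\bigcup_{(\varepsilon_1,\ldots,\varepsilon_k)\in\Sigma_\beta^k}\mathrm{int}\bigl(I_{n_{3k}}(\varepsilon_1,\ldots,\varepsilon_k,0^{n_k-k},\omega^{(k)}_1,\ldots,\omega^{(k)}_{2k})\bigr),
\]
where the blocks $\omega^{(k)}_i$ alternate between long strings of zeros (forcing large $r_n$) and repetitions of the full word $(1,0^{h-1})$ (forcing bounded $r_n$); it then verifies in one pass that every $x\in U$ satisfies both $\liminf r_n/\varphi(n)=0$ and $\limsup r_n/\varphi(n)=+\infty$, and that $U$ is dense. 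You instead decompose $E_{\max}^{\varphi}$ itself as the intersection $\bigcap A_{k,N}\cap\bigcap B_{M,N}$ and treat the two limit conditions separately, showing each piece contains a dense open set. The underlying combinatorics are the same: your fact~(ii) is exactly Remark~\ref{co}(3) of the paper (one may take $\ell_0(w)=\Gamma_{|w|}+1$), and your periodic word $w_\beta$ plays the role of the paper's full block $(1,0^{h-1})$. Your modular approach makes transparent which hypothesis drives which half --- $\limsup n/\varphi(n)=+\infty$ is used only for $B_{M,N}$, while $\varphi(n)\to\infty$ alone suffices for $A_{k,N}$ --- whereas the paper's single construction is more explicit and bundles the interleaving once and for all. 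Note in particular that the ``main obstacle'' you flag is not really an obstacle here: fact~(ii) is already available in the paper's preliminaries as a consequence of Theorem~\ref{AB}(2) and the definition of $\Gamma_n$.
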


By setting $\varphi(n)=\log_{\beta}n$ in the set  $E_{\max}^{\varphi}$ given as (\ref{set}) and combining the results of Tong et al. \cite{TYZ}, the following corollary  is immediate. This result gives an example that a set can be very small in the sense of topology but be large from the  measure-theoretical and dimensional points of view.
\begin{corollary}
The set $\{x: \lim\limits_{n\rightarrow \infty}\frac{r_n(x,\beta)}{\log_\beta n}=1\}$ is both of full measure and of the first category.
\end{corollary}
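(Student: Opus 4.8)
The plan is to read off both assertions from facts already in hand, so the argument is short. For the full-measure statement there is essentially nothing to prove: the theorem of Tong, Yu and Zhao \cite{TYZ} recalled in (\ref{2}) states that $\lim_{n\to\infty} r_n(x,\beta)/\log_\beta n = 1$ for Lebesgue-almost every $x\in[0,1]$, which is precisely the assertion that $S:=\{x\in[0,1]:\lim_{n\to\infty} r_n(x,\beta)/\log_\beta n=1\}$ has full Lebesgue measure.

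For the first-category statement I would apply Theorem \ref{th2} to the function $\varphi(n)=\log_\beta n$. This $\varphi$ is monotonically increasing with $\varphi(n)\to\infty$, and $\limsup_{n\to\infty} n/\varphi(n)=+\infty$ since $\log_\beta n = o(n)$; hence Theorem \ref{th2} applies and $E_{\max}^{\log_\beta n}$ is residual in $[0,1]$, so its complement $[0,1]\setminus E_{\max}^{\log_\beta n}$ is of the first category. It then remains only to note the inclusion $S\subseteq [0,1]\setminus E_{\max}^{\log_\beta n}$: indeed, if $x\in E_{\max}^{\log_\beta n}$ then $\liminf_{n\to\infty} r_n(x,\beta)/\log_\beta n=0$ while $\limsup_{n\to\infty} r_n(x,\beta)/\log_\beta n=+\infty$, so $\lim_{n\to\infty} r_n(x,\beta)/\log_\beta n$ does not exist and in particular is not equal to $1$, i.e. $x\notin S$. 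Since any subset of a first-category set is again of the first category, $S$ is of the first category, completing the proof.

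I do not expect any genuine obstacle in the corollary itself — all the substance sits in Theorem \ref{th2} for the topological side and in (\ref{2}) for the measure-theoretic side, with Theorem \ref{th1}(2) supplying the complementary dimensional information alluded to in the surrounding discussion. The only point to be careful about is the direction of the inclusions: one uses the residuality of $E_{\max}^{\log_\beta n}$ to conclude that its complement, hence the subset $S$, is topologically negligible, while (\ref{2}) simultaneously makes $S$ of full measure. The corollary is exactly this juxtaposition for the single admissible choice $\varphi(n)=\log_\beta n$, and it exhibits the stated phenomenon that a set can be large in measure (and dimension) yet small in the sense of Baire category.
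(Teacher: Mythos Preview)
Your proposal is correct and matches the paper's approach exactly: the paper also declares the corollary immediate from Tong et al.\ \cite{TYZ} for the full-measure part and from Theorem~\ref{th2} with $\varphi(n)=\log_\beta n$ for the first-category part, via the same inclusion $S\subseteq [0,1]\setminus E_{\max}^{\log_\beta n}$.
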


We complete this introduction by depicting the construction of this paper. In the next section, we summarize the relevant material on the $\beta$ expansions without proofs. For the third section, it is intended to motivate our investigation of the Hausdorff dimension of the set $E_{\max}^{\varphi}$. After constructing a subset $E_p$ of $E_{\max}^{\varphi}$, we get the lower bound of $\dim_{\rm H}E_p$ is $\frac{p-1}{p}$ and then we present a proof of Theorem \ref{th1}. The last section deals with the topological property of $E_{\max}^{\varphi}$ and gives a proof of Theorem \ref{th2}. In this section, it is worth pointing out that unlike the frequencies of digits and blocks investigated in the forthcoming publications \cite{PT,BO,H,M,MP,O}, the function $\frac{r_n(x,\beta)}{\varphi(n)}$ cannot be expressed by some frequencies, hence, we apply the method in \cite{ZWL} to get the residue of $E_{\max}^{\varphi}$ .

\section{Preliminaries}
In this section, we briefly sketch some of the standard facts on $\beta$-expansions and fix some notations. See \cite{BB,B,BW,AB,HF,LW,P,R} and references therein to get further properties about $\beta$-expansions.

A classical $\b$-transformation widely applied by many researchers is $$T(x):=\b x - \lfloor\b x\rfloor,\ 0 \leq x < 1, $$where $\lfloor x \rfloor$ denotes the largest integer not exceeding $x$. The transformation $T_{\b}(x)$ being adopted here is to guarantee that every real number $x \in (0, 1]$ has an infinite series of expansion, that is,\ $\varepsilon_n(x, \b) \geq 1$ for infinitely many $n \in \N$. This is ensured by the fact that $T_{\beta}(x)$ is strictly larger than $0$. Actually, the $\b$-expansions under the above two transformations coincide except at the specific points with a finite expansion under the algorithm $T(x)$.

The definition of $T_\b(x)$ gives that the $n$-th digit of $x$ verifies that $\varepsilon_n(x,\b)\in \A=\{0,\ldots,\lceil\b\rceil-1\}$ for all $n\geq 1$. What should be pointed out is that not all infinite sequences $\varepsilon \in \A^\N$ are the $\b$-expansion of some $x\in(0,1]$. Thus, here brings about the notation of \emph{$\b$-admissible sequence}.

A word $(\varepsilon_1,\ldots,\varepsilon_n)$ is said to be \emph{admissible} with respect to the base $\beta$ if there
exists an $x \in (0, 1]$ such that the $\beta$-expansion of $x$ satisfying $\varepsilon_1(x,\beta)=\varepsilon_1,\ldots,\varepsilon_n(x,\beta)=\varepsilon_n$. An infinite digit sequence $(\varepsilon_1,\ldots,\varepsilon_n,\ldots)$ is called admissible if there exists an $x \in (0, 1]$ having the $\b$-expansion as $(\varepsilon_1,\ldots,\varepsilon_n,\ldots)$.

For convenience, write $\S_\b^n$ as the family of all $\b$-admissible words with length $n$, i.e.,\ $$\S_\b^n=\{(\varepsilon_1,\ldots,\varepsilon_n)\in \A^n: \exists\ x \in (0,1],\ {\rm such\ that\ }\varepsilon_j(x,\b)=\varepsilon_j, {\rm\  for \ all }\ 1\leq j \leq n\}.$$ Let $\S_\b^\ast$ be the family  of all $\b$-admissible words with finite length, i.e.,\
$$\S_\b^\ast=\bigcup_{n=0}^\infty \S_\b^n.$$

The \emph{lexicographical order $<_{\rm{lex}}$} being endowed in the space $\A^\N$ is defined as
follows: $$(\varepsilon_1, \varepsilon_2,\ldots)<_{\rm{lex}}(\varepsilon'_1, \varepsilon'_2,\ldots)$$if there exists an integer $k \geq 1$ such that, for all $1 \leq j < k$, $\varepsilon_j=\varepsilon'_j$ but $\varepsilon_k<\varepsilon'_k$. The symbol $\leq_{\rm{lex}}$ stands for $=$ or $<_{\rm{lex}}$.

The $\b$-expansion of the unit $1$ plays a vital role not only in researching the dynamical properties of the orbit of $1$, but also in estimating the the properties about $\beta$-admissible words (\cite{BW}, see also \cite{LW,BT}).

Let$$1=\frac{\varepsilon_1^\ast}{\b}+\cdots+\frac{\varepsilon_n^\ast}{\b^n}+\cdots$$be the $\b$-expansion of the unit $1$. For each integer $n \geq 1$, define $$t_n=t_n(\b) := \max\{k \geq 0:\varepsilon_{n+1}^\ast =\cdots= \varepsilon_{n+k}^\ast=0\}.$$ And let
\begin{equation}\label{g}
\G_n = \G_n(\b) := \max_{1 \leq k \leq n}t_k(\b).
\end{equation}

Now we give some basic properties of the admissible words as the following theorem.
\begin{theorem}[Parry \cite{P}, R$\acute{e}$nyi \cite{R}]\label{P}
Given $\beta >1$.

(1)For every $n\geq 1$,$$\omega=(\omega_1,\ldots,\omega_n)\in \Sigma_\beta^n \Longleftrightarrow \sigma ^i\omega \leq_{\rm{lex}} (\varepsilon_1^\ast(1,\beta),\ldots,\varepsilon_{n-i}^\ast(1,\beta))\ for\ all\ i \geq 1,$$ where $\sigma$ is the shift transformation such that $\sigma\omega=(\omega_2,\omega_3,\ldots).$

(2)For all $n \geq 1$,$$\beta^n \leq \sharp \Sigma_\beta^n \leq \frac{\beta^{n+1}}{\beta-1},$$ where $\sharp$ is the cardinality of a finite set.
\end{theorem}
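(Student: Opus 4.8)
The plan is to prove the two parts with a common engine: the order-isomorphism between $\big((0,1],\le\big)$ and the admissible sequences ordered by $\le_{\rm lex}$. The tool I would set up first is the monotonicity lemma, namely that the coding map $x\mapsto\varepsilon(x,\beta)$ is strictly increasing, i.e. for $x,y\in(0,1]$ one has $x\le y\iff\varepsilon(x,\beta)\le_{\rm lex}\varepsilon(y,\beta)$. This is proved by looking at the first index $m$ at which the two expansions differ and using that on each of its monotone branches $T_\beta$ is affine and increasing with slope $\beta$, so that the branch interval containing $T_\beta^{m-1}x$, which is exactly what records $\varepsilon_m(x,\beta)$, respects the order; the only subtlety is the boundary point, handled by the convention that $T_\beta$ maps into $(0,1]$ so that $1$ itself receives the infinite expansion $\varepsilon^*(1,\beta)=(\varepsilon_1^*,\varepsilon_2^*,\ldots)$. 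An immediate consequence, applied to any $y\le1$, is that every prefix of an admissible sequence is $\le_{\rm lex}$ the corresponding prefix of $\varepsilon^*(1,\beta)$.

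For the forward implication of Part (1), suppose $\omega=(\omega_1,\ldots,\omega_n)\in\Sigma_\beta^n$ is realised by some $x$. For each $i\ge1$ the iterate $T_\beta^i x$ again lies in $(0,1]$ and, by the definition of the digits, has $\beta$-expansion $\sigma^i\varepsilon(x,\beta)=(\omega_{i+1},\omega_{i+2},\ldots)$. Since $T_\beta^ix\le1$, the monotonicity lemma gives $\sigma^i\varepsilon(x,\beta)\le_{\rm lex}\varepsilon^*(1,\beta)$, and truncating both sides at length $n-i$ yields exactly $\sigma^i\omega\le_{\rm lex}(\varepsilon_1^*,\ldots,\varepsilon_{n-i}^*)$. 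For the converse I would argue that the stated inequalities force the cylinder $I(\omega)=\{x\in(0,1]:\varepsilon_j(x,\beta)=\omega_j,\ 1\le j\le n\}$ to be nonempty, by induction on $n$: writing $I(\omega)=(\text{branch }\omega_1)\cap T_\beta^{-1}I(\sigma\omega)$, a full branch ($\omega_1<\lceil\beta\rceil-1$) maps onto all of $(0,1]$ and nonemptiness is inherited from $\sigma\omega$, while the extremal branch ($\omega_1=\lceil\beta\rceil-1=\varepsilon_1^*$) maps onto $(0,T_\beta1]$, so one must check that the left endpoint of $I(\sigma\omega)$ lies below $T_\beta1$; translating this through the monotonicity lemma is precisely where the hypotheses at the various shift indices, compared against $\varepsilon^*(1,\beta)$, are consumed.

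For Part (2) I would use the cylinders $I(\omega)$, $\omega\in\Sigma_\beta^n$, which partition $(0,1]$. On each cylinder $T_\beta^n$ is affine and increasing with slope $\beta^n$ and maps $I(\omega)$ onto an interval $(0,r_\omega]$ with $r_\omega\le1$; hence $|I(\omega)|=r_\omega\beta^{-n}$, and summing gives $\sum_{\omega}r_\omega=\beta^n$. The lower bound is immediate from $1=\sum_\omega|I(\omega)|\le\sharp\Sigma_\beta^n\cdot\beta^{-n}$, so $\sharp\Sigma_\beta^n\ge\beta^n$. For the upper bound I count the one-symbol extensions of $\omega$: they correspond to the branches of $T_\beta$ meeting $(0,r_\omega]$, of which there are exactly $\lceil\beta r_\omega\rceil$. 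Writing $N_n=\sharp\Sigma_\beta^n$ and using $\lceil\beta r_\omega\rceil\le\beta r_\omega+1$ together with $\sum_\omega r_\omega=\beta^n$ gives $N_{n+1}=\sum_{\omega\in\Sigma_\beta^n}\lceil\beta r_\omega\rceil\le\beta\sum_\omega r_\omega+N_n=\beta^{n+1}+N_n$. Iterating this recursion from $N_0=1$ yields $N_n\le1+\beta+\cdots+\beta^n=\frac{\beta^{n+1}-1}{\beta-1}<\frac{\beta^{n+1}}{\beta-1}$, which is the asserted upper bound.

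I expect the one genuinely delicate step to be the converse of Part (1): the naive attempt to realise $\omega$ by appending $\varepsilon^*(1,\beta)$ can fail admissibility at an intermediate shift, so the nonemptiness of $I(\omega)$ really has to be extracted from the nested-interval induction above, and it is there that the self-referential structure of the expansion of $1$ (that each of its shifts is $\le_{\rm lex}\varepsilon^*(1,\beta)$) must be invoked with care. Once the order-isomorphism and the cylinder geometry $|I(\omega)|=r_\omega\beta^{-n}$ are in hand, the remaining content of Part (2) is the short recursion above and is essentially mechanical.
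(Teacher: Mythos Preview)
The paper does not prove this theorem at all: it is stated as a quoted result, attributed to Parry and R\'{e}nyi via the citations \cite{P} and \cite{R}, and is used as background input for the rest of the argument. There is therefore no ``paper's own proof'' to compare against.

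That said, your sketch is a faithful outline of the classical arguments. The monotonicity of the coding, the forward implication of (1) via $T_\beta^i x\le 1$, and the lower bound in (2) via $\sum_{\omega}|I(\omega)|=1$ with $|I(\omega)|\le\beta^{-n}$ are all standard and correct. Your upper bound in (2) is a clean version of R\'{e}nyi's recursion: from $|I(\omega)|=r_\omega\beta^{-n}$ one gets $\sum_\omega r_\omega=\beta^n$, and counting one-step extensions by $\lceil\beta r_\omega\rceil\le\beta r_\omega+1$ yields $N_{n+1}\le\beta^{n+1}+N_n$, hence $N_n\le\sum_{k=0}^{n}\beta^k<\beta^{n+1}/(\beta-1)$. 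You are also right that the only place real care is needed is the converse of (1); your nested-interval induction is the correct mechanism, and in the extremal-branch case one indeed needs that $\varepsilon^\ast(1,\beta)$ is itself shift-maximal so that the comparison $\sigma^i\omega\le_{\rm lex}(\varepsilon_1^\ast,\ldots,\varepsilon_{n-i}^\ast)$ can be propagated. None of this is carried out in the present paper, which simply imports the theorem.
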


For an admissible word $(\varepsilon_1,\ldots,\varepsilon_n)$, we define the \emph{basic interval} of order $n$ denoted by $I_n(\varepsilon_1,\ldots, \varepsilon_n)$ as $$I_n(\varepsilon_1,\ldots, \varepsilon_n):= \{x \in (0,1]: \varepsilon_j(x,\b)=\varepsilon_j,\ {\rm for\ all}\  1 \leq j \leq n\}.$$ A simple fact of the basic interval $I_n(\varepsilon_1,\ldots,\varepsilon_n)$ is that it is a left-open and right-closed interval with $\frac{\varepsilon_1}{\b}+\cdots+\frac{\varepsilon_n}{\b^n}$ as its left endpoint, the detailed proofs appear in \cite{AB}. We write the length of $I_n(\varepsilon_1,\ldots,\varepsilon_n)$ as $|I_n(\varepsilon_1,\ldots,\varepsilon_n)|$.  In \cite{LW}, it is shown that $|I_n(\varepsilon_1,\ldots,\varepsilon_n)|\leq \beta^{-n}$. We denote $I_n(x,\beta)$ as the basic interval of order $n$ which contains $x$, and respectively, $|I_n(x,\beta)|$ as its length. What should be noticed is that the basic interval $I_n(x,\beta)$ depends on $\beta$. Here and subsequently, $I_n(x)$ stands for $I_n(x,\beta)$ without any ambiguity for simplicity of notation.

The notation of full intervals is of importance to get an evaluation of the length of $I_n(\varepsilon_1,\ldots,\varepsilon_n)$. In this paper, now we give the definition and state some simple facts on the full intervals. A basic interval $I_n(\varepsilon_1,\ldots,\varepsilon_n)$ is said to be \emph{full} if its length verifies $$|I_n(\varepsilon_1,\ldots,\varepsilon_n)| = \b^{-n}.$$ Respectively, we call the corresponding word of the full basic interval as a {\it full word}.

Several characterizations and properties of full intervals are established by Fan and Wang  \cite{AB} as follows.
\begin{theorem}[Fan and Wang \cite{AB}]\label{AB}
Let $\varepsilon=(\varepsilon_1,\ldots,\varepsilon_n)\in \S_\b^n$ with $n \geq 1$.

(1)The basic interval $I_n(\varepsilon)$ is a full interval. $\Longleftrightarrow$ $T_\beta^n(I_n(\varepsilon))=(0,1].$ $\Longleftrightarrow$ For any $m \geq 1$ and any $\varepsilon'= (\varepsilon'_1,\ldots,\varepsilon'_m)\in \S_\b^m$, the concatenation $\varepsilon \ast \varepsilon'=(\varepsilon_1, \ldots,\varepsilon_n,\varepsilon'_1,\ldots, \varepsilon'_m)$ is admissible.

(2) If $(\varepsilon_1,\ldots,\varepsilon_{n-1},\varepsilon'_n)$ with $\varepsilon'_n \neq 0$ is admissible, then the basic interval $I_n(\varepsilon_1,\ldots,\varepsilon_{n-1},\varepsilon_n)$ is full for any $0 \leq \varepsilon_n < \varepsilon'_n$.

(3) If $I_n(\varepsilon)$ is full, then for any $(\varepsilon'_1,\ldots,\varepsilon'_m)\in \S_\b^m$ , we obtain the following equality that $$|I_{n+m}(\varepsilon_1,\ldots, \varepsilon_n,\varepsilon'_1,\ldots,\varepsilon'_m)| = |I_n(\varepsilon_1,\ldots,\varepsilon_n)| \cdot |I_n(\varepsilon'_1,\ldots,\varepsilon'_m)|.$$
\end{theorem}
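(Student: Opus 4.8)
The plan is to exploit the fact that, on a basic interval of order $n$, the iterate $T_\beta^n$ acts as an increasing affine map of slope $\beta^n$. Concretely, writing $\ell = \frac{\varepsilon_1}{\beta} + \cdots + \frac{\varepsilon_n}{\beta^n}$ for the left endpoint of $I_n(\varepsilon)$, iterating the expansion (\ref{1.1}) gives $T_\beta^n x = \beta^n(x - \ell)$ for every $x \in I_n(\varepsilon)$, so $T_\beta^n$ maps $I_n(\varepsilon)$ bijectively onto the half-open interval $(0, \beta^n|I_n(\varepsilon)|]$. Since $T_\beta^n x \le 1$ this recovers $|I_n(\varepsilon)| \le \beta^{-n}$, and it makes the first equivalence in part (1) immediate: $|I_n(\varepsilon)| = \beta^{-n}$ holds precisely when $\beta^n|I_n(\varepsilon)| = 1$, that is, when $T_\beta^n(I_n(\varepsilon)) = (0,1]$.

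For the second equivalence in part (1), I would observe that $\varepsilon \ast \varepsilon'$ is admissible exactly when $I_{n+m}(\varepsilon \ast \varepsilon') \ne \emptyset$, and that $I_{n+m}(\varepsilon \ast \varepsilon') = (T_\beta^n|_{I_n(\varepsilon)})^{-1}(I_m(\varepsilon'))$; hence admissibility of the concatenation is equivalent to $I_m(\varepsilon') \cap T_\beta^n(I_n(\varepsilon)) \ne \emptyset$. If $I_n(\varepsilon)$ is full then $T_\beta^n(I_n(\varepsilon)) = (0,1] \supseteq I_m(\varepsilon')$, so every admissible $\varepsilon'$ yields an admissible concatenation. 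For the converse I would argue by contraposition: if $I_n(\varepsilon)$ is not full, the gap $(\beta^n|I_n(\varepsilon)|, 1]$ has positive length, and since the order-$m$ basic intervals partition $(0,1]$ with lengths at most $\beta^{-m} \to 0$, for $m$ large one such interval $I_m(\varepsilon')$ lies entirely inside the gap; the corresponding $\varepsilon'$ is then admissible while $\varepsilon \ast \varepsilon'$ is not. This covering argument, together with the care needed over the half-open endpoints, is the step I expect to be the main obstacle.

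Part (2) I would prove geometrically inside $I_{n-1}(\varepsilon_1,\ldots,\varepsilon_{n-1})$. The map $T_\beta^{n-1}$ sends this interval onto $(0, \beta^{n-1}|I_{n-1}|]$, and a point has $n$-th digit $k$ exactly when its $T_\beta^{n-1}$-image lies in $(\frac{k}{\beta}, \frac{k+1}{\beta}]$; consequently $I_n(\varepsilon_1,\ldots,\varepsilon_{n-1},k)$ is full iff $(\frac{k}{\beta}, \frac{k+1}{\beta}] \subseteq (0, \beta^{n-1}|I_{n-1}|]$, i.e. iff $\frac{k+1}{\beta} \le \beta^{n-1}|I_{n-1}|$. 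Admissibility of $(\varepsilon_1,\ldots,\varepsilon_{n-1},\varepsilon'_n)$ with $\varepsilon'_n \ne 0$ forces $\frac{\varepsilon'_n}{\beta} < \beta^{n-1}|I_{n-1}|$, so for any $0 \le \varepsilon_n < \varepsilon'_n$ we obtain $\frac{\varepsilon_n+1}{\beta} \le \frac{\varepsilon'_n}{\beta} < \beta^{n-1}|I_{n-1}|$, which is exactly the fullness criterion.

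Finally, for part (3) I would use that when $I_n(\varepsilon)$ is full, $T_\beta^n|_{I_n(\varepsilon)}$ is an affine bijection of slope $\beta^n$ onto $(0,1]$, so the preimage of $I_m(\varepsilon')$ has length $\beta^{-n}|I_m(\varepsilon')| = |I_n(\varepsilon)|\cdot|I_m(\varepsilon')|$, giving the stated product formula (the factor written $|I_n(\varepsilon'_1,\ldots,\varepsilon'_m)|$ in the statement being understood as $|I_m(\varepsilon'_1,\ldots,\varepsilon'_m)|$). An attractive feature of this route is that, apart from the partition property of basic intervals, it relies only on the affine structure of $T_\beta^n$ and never invokes Parry's criterion (Theorem \ref{P}) directly, although one could alternatively recast the admissibility equivalences of parts (1) and (2) in purely symbolic terms via that criterion.
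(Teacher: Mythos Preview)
The paper does not prove Theorem \ref{AB} itself; it quotes the result from Fan and Wang \cite{AB} without supplying an argument, so there is no in-paper proof to compare against. Your proposal is correct and is essentially the standard route (and close in spirit to Fan--Wang's own): the key observation that $T_\beta^n$ restricted to $I_n(\varepsilon)$ is the increasing affine map $x \mapsto \beta^n(x-\ell)$ onto $(0,\beta^n|I_n(\varepsilon)|]$ immediately yields the first equivalence in (1) and the product formula (3), while your geometric treatment of (2) via the digit condition $T_\beta^{n-1}x \in (k/\beta,(k+1)/\beta]$ and your contrapositive covering argument for the second equivalence in (1) are both sound, including the handling of the half-open endpoints dictated by the ceiling-based definition of $T_\beta$ used in this paper.
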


\begin{remark}\label{co}
(1) Intuitively, it can get from Theorem \ref{AB}(3) that, for any admissible words $(\varepsilon_1,\ldots,\varepsilon_n)$, $(\varepsilon'_1,\ldots,\varepsilon'_m)$, if both the intervals $I_n(\varepsilon_1,\ldots,\varepsilon_n)$ and $I_m(\varepsilon'_1,\ldots,\varepsilon'_m)$ are full, then the concatenation $I_{n+m}(\varepsilon_1,\ldots, \varepsilon_n,\varepsilon'_1,\ldots,\varepsilon'_m)$ is still full. This gives a way to construct a new full basic interval by two full basic intervals.

(2) Another direct result from Theorem \ref{AB}(2) and (3) is that for every integer $\ell\geq 1$, for all full basic interval $I_n(\varepsilon_1,\ldots,\varepsilon_n)$, the basic interval $I_{n+\ell}(\varepsilon_1,\ldots,\varepsilon_n,0^\ell)$ is full where $0^\ell$ is a word with $\ell$ zeros, i.e., $0^\ell=(\underbrace{0,\ldots,0}_\ell)$.

(3) Recall the definition of $\Gamma_n$ as (\ref{g}), combined with Theorem \ref{AB}(2), we get that the basic interval $I_{n+\G_n+1}(\varepsilon_1,\ldots,\varepsilon_n,0^{\G_n+1})$ is full for all $n\geq 1$.
\end{remark}

Moreover, the following theorem due to Bugeaud and Wang \cite{BW} will be used in this paper to estimate the number of full basic intervals.
\begin{theorem}[Bugeaud and Wang \cite{BW}]\label{no}
There is at least one full basic interval for all $n+1$ consecutive basic intervals of order $n$.
\end{theorem}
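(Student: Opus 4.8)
The plan is to recast the statement as a purely combinatorial bound on runs of ``deficient'' words, and to exhibit a strictly monotone quantity that forces such a run to be short.

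First I would fix $n$ and list the order-$n$ basic intervals in their natural left-to-right order $I^{(1)}, I^{(2)}, \ldots, I^{(\ell)}$, where $\ell=\sharp\S_\b^n$, with corresponding admissible words $\omega^{(1)}, \ldots, \omega^{(\ell)}$; this ordering coincides with the lexicographic order $<_{\rm lex}$ on the words. The assertion that among any $n+1$ consecutive order-$n$ basic intervals at least one is full is equivalent to saying that there is no block of $n+1$ consecutive non-full intervals, i.e.\ that every maximal run of consecutive non-full intervals has length at most $n$. So it suffices to bound the length of any such run by $n$.

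Second, I would record a combinatorial description of non-fullness in terms of the $\b$-expansion $(\varepsilon_1^\ast,\varepsilon_2^\ast,\ldots)$ of $1$: an admissible word $\omega=(\omega_1,\ldots,\omega_n)$ gives a non-full interval $I_n(\omega)$ if and only if some suffix of $\omega$ equals the prefix of $(\varepsilon_k^\ast)$ of the same length, i.e.\ $(\omega_{i+1},\ldots,\omega_n)=(\varepsilon_1^\ast,\ldots,\varepsilon_{n-i}^\ast)$ for some $0\le i\le n-1$. This follows by unwinding Theorem \ref{AB}(2) together with Theorem \ref{P}(1): the largest digit admissible after the prefix $(\omega_1,\ldots,\omega_{n-1})$ equals $\varepsilon_{M+1}^\ast$, where $M$ is the length of the longest suffix of $(\omega_1,\ldots,\omega_{n-1})$ that is a prefix of $(\varepsilon_k^\ast)$; the interval is full exactly when $\omega_n$ is strictly below this largest digit, and non-full exactly when equality holds, which is precisely the existence of a matching suffix. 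For a non-full word I then set $L(\omega)$ to be the \emph{maximal} length of such a matching suffix, so that $L(\omega)\in\{1,2,\ldots,n\}$.

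Third, and this is the heart of the argument, I would show that two consecutive non-full intervals force $L$ to increase strictly: if $I^{(j)}$ and $I^{(j+1)}$ are both non-full, then $L(\omega^{(j+1)})>L(\omega^{(j)})$. The mechanism is the passage from $\omega^{(j)}$ to its lexicographic successor $\omega^{(j+1)}$. The maximal matched suffix of $\omega^{(j)}$ reproduces $(\varepsilon_1^\ast,\ldots,\varepsilon_L^\ast)$, and by the description of maximal admissible digits these coordinates are already as large as admissibility permits; hence forming the successor increments some coordinate $t$ and resets all later coordinates to $0$ (appending zeros always preserves admissibility by Theorem \ref{P}(1)). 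Writing $\omega^{(j+1)}=(\omega_1,\ldots,\omega_{t-1},\omega_t+1,0,\ldots,0)$ and using that $I^{(j+1)}$ is non-full—so it has a nonempty matching suffix—one checks that this matching suffix must run through the block of trailing zeros and has length strictly larger than $L(\omega^{(j)})$. Granting this, along any maximal run of consecutive non-full intervals the values of $L$ form a strictly increasing sequence inside $\{1,\ldots,n\}$, so the run has at most $n$ terms, which is exactly the bound we need.

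Finally, a word on where the difficulty lies. The reduction to runs and the suffix characterization are routine consequences of Theorems \ref{P} and \ref{AB}. The genuine obstacle is the strict-monotonicity step: one must control \emph{where} the lexicographic successor increments and verify that the new matching suffix is strictly longer. This requires care when $(\varepsilon_k^\ast)$ has internal self-overlaps, since then the increment may fall inside the old matched block and the new matching suffix of $\omega^{(j+1)}$ may begin strictly to the left of the increment position $t$; and in the degenerate cases where the expansion of $1$ is finite or eventually periodic (so that $T_\b^L(1)$ may equal $1$). In each of these cases the same strict increase of $L$ must be re-checked directly, but it persists, and this closes the argument.
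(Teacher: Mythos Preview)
The paper does not prove Theorem~\ref{no}: it is quoted as a result of Bugeaud and Wang \cite{BW} and is used as a black box in Lemma~\ref{c}. So there is no proof in this paper to compare your proposal against.

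On its own merits, your strategy is correct and is in fact close to the argument in \cite{BW}. Two small points are worth tightening. First, the ``if and only if'' in your characterisation of non-fullness is not literally true when $\varepsilon^\ast$ is purely periodic: for the golden ratio one has $\varepsilon^\ast=(1,0,1,0,\dots)$ and $I_2(1,0)$ is full even though the suffix $(1,0)$ matches the length-$2$ prefix of $\varepsilon^\ast$. The precise statement is that $I_n(\omega)$ is non-full iff some suffix of $\omega$ equals $(\varepsilon_1^\ast,\dots,\varepsilon_m^\ast)$ \emph{and} $\sigma^m\varepsilon^\ast<_{\rm lex}\varepsilon^\ast$. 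Fortunately your argument only uses the forward implication (non-full $\Rightarrow$ some matching suffix exists), which is correct, so the slip is harmless. Second, the monotonicity step can be made completely explicit and then the self-overlap and periodic worries evaporate: if $L=L(\omega^{(j)})$ is the maximal matching suffix length, then incrementing any position $n-L+1\le s\le n$ produces a word whose suffix starting at position $n-L+1$ equals $(\varepsilon_1^\ast,\dots,\varepsilon_{s-(n-L)-1}^\ast,\varepsilon_{s-(n-L)}^\ast+1,0,\dots,0)>_{\rm lex}(\varepsilon_1^\ast,\dots,\varepsilon_L^\ast)$, contradicting Theorem~\ref{P}(1). Hence the lexicographic successor increments at some $t\le n-L$ and has the form $(\omega_1,\dots,\omega_{t-1},\omega_t+1,0^{\,n-t})$. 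If this successor is again non-full it has a matching suffix, and since $\varepsilon_1^\ast\ge1$ that suffix cannot lie entirely in the block of trailing zeros; thus it starts at a position $\le t$ and has length $\ge n-t+1\ge L+1$. This gives $L(\omega^{(j+1)})>L(\omega^{(j)})$ unconditionally, with no separate case analysis needed, and the bound of $n$ on the run length follows.
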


\section{ Proof of Theorem \ref{th1}}
Before proving Theorem \ref{th1}, we introduce our method to getting the Hausdorff dimension of the set $E_{\max}^{\varphi}$. By Remark \ref{re1}, we only need to consider the case that $\limsup\limits_{n\rightarrow \infty}\frac{n}{\varphi(n)}=+\infty$, we suppose this condition is true in the remainder of this paper without otherwise specified. For any sufficiently large integer $p$, we can always construct a set $E_p \subset E_{\max}^{\varphi}$ with Hausdorff dimension being larger than $\frac{p-1}{p}$. Then by letting $p\rightarrow +\infty$, the relationship between $E_p$ and $E_{\max}^{\varphi}$ gives that $E_{\max}^{\varphi}$ is of full dimension. For more details about the Hausdorff dimension, we refer the reader to \cite{FE}.
\subsection{Construction of Cantor subset $E_p$ of $E_{\max}^{\varphi}$}
Let $p\in \N,\ p> 1$. Now we are going to construct the desired set  $E_p \subset E_{\max}^{\varphi}$ whose Hausdorff dimension is larger than $\frac{p-1}{p}$. For the sake of convenience, repeated construction of full basic intervals is applied in constructing $E_p$ which satisfies the properties above. Our construction of the set $E_p$ is divided into three steps.

\textbf{\textbf{Step \uppercase\expandafter{\romannumeral1}}} Fixed $\beta>1$. Let
\begin{equation}\label{hh}
h=\min\{k\geq 2:(1,0^{k-2},1) {\rm\ is\ \beta-admissible}\}.
\end{equation} Since the $\beta$-expansion of $x\in(0,1]$ is infinite, we get that $h$ is a finite integer. Furthermore, Theorem \ref{AB}(2) implies that $I_h(1,0^{h-1})$ is full since the word $(1,0^{h-2},1)$ is $\beta$-admissible by the  definition of $h$. The facts that $\limsup\limits_{n\rightarrow \infty}\frac{n}{\varphi(n)}=+\infty$ and $\lim\limits_{n\rightarrow \infty}\varphi(n)=+\infty$ give that there exists a subsequence $\{n_k\}_{k\geq 1}\subset \N$ satisfying
\begin{equation}\label{lim1}
\lim\limits_{k\rightarrow \infty}\frac{n_k}{\varphi(n_k)}=+\infty
\end{equation}with $n_1\geq e^{h+1}$ and
\begin{equation}\label{lim2}
n_k \geq\varphi(n_k)\geq kn_{k-1}
\end{equation} for all $k\geq 2,\ k\in \N$.

Let $G_1=\{(10^{n_1-1})\}$ be a singleton. Then the basic interval $I_{n_1}(10^{n_1-1})$ is full since $n_1\geq e^{h+1}$. For every $k\geq 1$, let $d_k=\lfloor\log n_k\rfloor$. Write
\begin{equation}\label{M}
M_{d_k}=\{(\varepsilon_1,\ldots,\varepsilon_{d_k})\in \S_{\beta}^{d_k}:\ \varepsilon_1=1\ {\rm and\ } I_{d_k}(\varepsilon_1,\ldots,\varepsilon_{d_k})\ {\rm is\ full}\}.
\end{equation}
Then the choice of $n_1\geq e^{h+1}$ ensures that $d_k> h$ for every $k\geq 2$.
For any $j\in \Z^+$, let $$t_{2j}=\left\lfloor\frac{n_{2j}-n_{2j-1}}{d_{2j-1}}\right\rfloor,\  t_{2j+1}=\left\lfloor\frac{\frac{p-1}{p}n_{2j+1}-n_{2j}}{d_{2j}}\right\rfloor.$$ As a result from the choice of $n_k$ in (\ref{lim2}), we get that $t_k\geq 1$ for each $k\geq 1$. Next, for all $k\geq 1$, define
$$G_k=\{u_k=(u_{k-1}^{(1)},\ldots,u_{k-1}^{(t_k)},0^{n_{k}-n_{k-1}-d_{k-1}t_{k}}):\ u_{k-1}^{(i)}\in M_{d_{k-1}}{\rm \ for\ all\ } 1\leq i\leq t_k \}.$$
Then, we have for each $u_k\in G_k$, the length of $u_k$ satisfies that $|u_k|=n_k-n_{k-1}$. By Theorem \ref{AB}(1) and (3), we obtain that every $u_k$ in $G_k(k\geq1)$ is admissible and it is full which demonstrates that every $u_k$ in $G_k(k\geq1)$ can be concatenated by any $\beta$-admissible word. So $D_k$ can be well defined by Theorem \ref{AB}(1) as follows.
Let \begin{equation}\label{D}
D_k= \left\{(u_1,\ldots,u_k):u_i\in G_i, \ {\rm for\ all}\ 1\leq i\leq k\right\}.
\end{equation}
\textbf{\textbf{Step \uppercase\expandafter{\romannumeral2}}} Define $J_u$ for each $u\in D_k,\ k\in \Z^+$. For each $u=(u_1,\ldots,u_k)\in D_k,$ note that the length of $u$, denoted by $|u|$, satisfying that $$|u|=|u_1|+|u_2|+\cdots+|u_k|=n_1+(n_2-n_1)+\cdots+(n_k-n_{k-1})=n_k.$$ Let $$J_u=I_{n_k}(u).$$
\textbf{\textbf{Step \uppercase\expandafter{\romannumeral3}}} Finally, set $$E_p=\bigcap_{k\geq 1}\bigcup_{u \in D_k}J_u.$$

The following lemma provides a detailed exposition of showing $E_p\subset E_{\max}^{\varphi}$ for every $p\in \N,\ p> 1$.
\begin{lemma}\label{sub}
Assume that $\limsup\limits_{n\rightarrow\infty}\frac{n}{\varphi(n)}=+\infty.$ For every $p\in \N,\ p> 1$, we have $E_p\subset  E_{\max}^{\varphi}$.
\end{lemma}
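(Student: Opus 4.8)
The plan is to verify the two defining conditions of $E_{\max}^{\varphi}$ for an arbitrary $x\in E_p$, namely that $\liminf_{n\to\infty} r_n(x,\beta)/\varphi(n)=0$ and $\limsup_{n\to\infty} r_n(x,\beta)/\varphi(n)=+\infty$. The key observation is that the construction alternates between two kinds of blocks: at even stages $2j$ we insert a long run of zeros of length roughly $n_{2j}-n_{2j-1}-d_{2j-1}t_{2j}<d_{2j-1}$ on the one hand but, crucially, at even stages we pad with $t_{2j}$ copies of words from $M_{d_{2j-1}}$, each of which starts with the digit $1$, so that runs of zeros inside $u_{2j}$ are short — of length at most $d_{2j-1}-1+d_{2j-1}-1$ near a boundary, i.e. $O(d_{2j-1})=O(\log n_{2j-1})$; on the other hand at odd stages $2j+1$ we only fill up to position $\frac{p-1}{p}n_{2j+1}$ with the $M$-blocks and leave a trailing run of zeros of length $n_{2j+1}-n_{2j}-d_{2j}t_{2j+1}\approx \frac1p n_{2j+1}$, which is comparable to $n_{2j+1}$.

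First I would set up notation: for $x\in E_p$ write its $\beta$-expansion and track, along the subsequence $\{n_k\}$, the maximal run of zeros seen so far. For the $\limsup$ direction, I would evaluate $r_n(x,\beta)$ at $n=n_{2j+1}$. By construction the word $u_{2j+1}$ ends with $0^{n_{2j+1}-n_{2j}-d_{2j}t_{2j+1}}$, and since $t_{2j+1}=\lfloor(\frac{p-1}{p}n_{2j+1}-n_{2j})/d_{2j}\rfloor$ this trailing run has length at least $\frac1p n_{2j+1}-n_{2j}-d_{2j}$. Hence
\[
r_{n_{2j+1}}(x,\beta)\ \geq\ \frac1p n_{2j+1}-n_{2j}-d_{2j}.
\]
Dividing by $\varphi(n_{2j+1})$ and using $n_{2j}\leq \varphi(n_{2j})\leq (2j+1)^{-1}n_{2j+1}$ from (\ref{lim2}) together with $d_{2j}=\lfloor\log n_{2j}\rfloor=o(n_{2j+1})$, the right-hand side is asymptotically $\frac1p\cdot\frac{n_{2j+1}}{\varphi(n_{2j+1})}$, which tends to $+\infty$ by (\ref{lim1}). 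This gives $\limsup_n r_n(x,\beta)/\varphi(n)=+\infty$.

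For the $\liminf$ direction I would evaluate at $n=n_{2j}$ and bound $r_{n_{2j}}(x,\beta)$ from above. The prefix of $x$ of length $n_{2j}$ is $(u_1,\dots,u_{2j})$; within this prefix, the longest run of zeros can occur either inside some $u_k$ with $k$ odd (a trailing run of length $\approx\frac1p n_k\le\frac1p n_{2j-1}$, for $k\le 2j-1$), or inside some $u_k$ with $k$ even where the zero runs are confined to $O(d_{k-1})$ as noted above, or straddling a boundary between consecutive $M$-blocks, which again only adds $O(d_{k-1})$ zeros since each $M$-block begins with a $1$. Taking the maximum over $k\le 2j$, one gets a bound of the form $r_{n_{2j}}(x,\beta)\le \frac1p n_{2j-1}+C\log n_{2j-1}$ for a constant $C$ depending only on $\beta$ and $p$. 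Dividing by $\varphi(n_{2j})\ge 2j\cdot n_{2j-1}$ from (\ref{lim2}) yields a quantity bounded by $\frac{1}{p\cdot 2j}+o(1)\to 0$. Hence $\liminf_n r_n(x,\beta)/\varphi(n)=0$, and $x\in E_{\max}^{\varphi}$.

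The main obstacle is the careful bookkeeping in the $\liminf$ step: one must argue that no run of zeros inside the first $n_{2j}$ digits is longer than $O(n_{2j-1})$, which requires knowing precisely how long a run of zeros the concatenation of $M_{d_{k-1}}$-words and the short terminal padding $0^{n_k-n_{k-1}-d_{k-1}t_k}$ can produce. The worst case is two short zero-runs meeting at the junction of adjacent blocks (one from the tail of an $M$-word, one from the head of the next, or from the terminal padding), so the bound is at most (padding length) $+ 2(d_{k-1}-1)$, and the padding length $n_k-n_{k-1}-d_{k-1}t_k$ is $<d_{k-1}$ at even stages and $\approx\frac1p n_k$ at odd stages; I must make sure that at stage $n_{2j}$ the largest such contribution is the odd-stage terminal run at stage $2j-1$, which is $\le\frac1p n_{2j-1}$, comfortably $o(\varphi(n_{2j}))$ by (\ref{lim2}).
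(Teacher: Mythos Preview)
Your proposal is correct and follows essentially the same route as the paper: evaluate $r_n(x,\beta)/\varphi(n)$ along the subsequence $\{n_{2j+1}\}$ for the $\limsup$ (using the long tail of zeros in $u_{2j+1}$, of length $\ge \tfrac1p n_{2j+1}$) and along $\{n_{2j}\}$ for the $\liminf$ (using that every $M$-block starts with $1$, so the longest zero run up to time $n_{2j}$ is the trailing run of some earlier $u_k$, dominated by the odd-stage run at $k=2j-1$). Two small bookkeeping slips to clean up: the trailing run in $u_{2j+1}$ is actually $\ge \tfrac1p n_{2j+1}$ outright (your stated bound $\tfrac1p n_{2j+1}-n_{2j}-d_{2j}$ is weaker than needed, though still sufficient), and your citation of (\ref{lim2}) should read $\varphi(n_k)\le n_k$ rather than $n_{2j}\le\varphi(n_{2j})$; neither affects the argument.
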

\begin{proof}
Let $x \in E_p$, we shall prove that $$\liminf_{n\rightarrow\infty}\frac{r_n(x,\beta)}{\varphi(n)}=0$$ and
$$\limsup_{n\rightarrow\infty}\frac{r_n(x,\beta)}{\varphi(n)}=+\infty.$$

On the one hand, the construction of $E_p$ yields that the word $u\in D_k$ verifies the following properties:

(1)The character of $u_{k-1}^{(i)}\in M_{d_{k-1}}(1\leq i\leq t_k)$ beginning with $1$ ensures that the maximal length of zeros in every word $u\in D_k$ only appears at the tail of $u_k\in G_k$ for all $k\geq 1$;

(2) The maximal length of zeros in every word $u_{2j+1}\in G_{2j+1}$ for all $j\geq 0$ is increasing with respect to $j$. Thus, for amplitude $j\geq 1$, noticing that $$ n_{2j}-n_{2j-1}-d_{2j-1}t_{2j}+d_{2j-1}=  n_{2j}-n_{2j-1}-d_{2j-1}\left\lfloor\frac{n_{2j}-n_{2j-1}}{d_{2j-1}}\right\rfloor+d_{2j-1}\leq 2d_{2j-1},$$ we have $$r_{n_{2j}}(x,\beta)\leq \max\{n_{2j-1}-n_{2j-2}-d_{2j-2}t_{2j-1}+d_{2j-2}, n_{2j}-n_{2j-1}-d_{2j-1}t_{2j}+d_{2j-1}\}<2n_{2j-1}.$$Therefore, $$\liminf_{n\rightarrow\infty}\frac{r_n(x,\beta)}{\varphi(n)}\leq \liminf_{j\rightarrow\infty}\frac{r_{n_{2j}}(x,\beta)}{\varphi(n_{2j})}\leq \liminf_{j\rightarrow\infty}\frac{2n_{2j-1}}{\varphi(n_{2j})}\leq \lim_{j\rightarrow\infty}\frac{2n_{2j-1}}{2jn_{2j-1}}=0,$$ where the last inequality follows from (\ref{lim2}).

On the other hand, we note that there are at least $n_k-n_{k-1}-d_{k-1}t_k$ zeros in every word $u\in D_k$.  So it holds that $$r_{n_{2j+1}}(x,\beta)\geq n_{2j+1}-n_{2j}-d_{2j}t_{2j+1}\geq n_{2j+1}-n_{2j}-d_{2j}\left(\frac{\frac{p-1}{p}n_{2j+1}-n_{2j}}{d_{2j}}+1\right)>\frac{1}{p}n_{2j+1}.$$
Thus, $$\limsup_{n\rightarrow\infty}\frac{r_n(x,\beta)}{\varphi(n)}\geq \limsup_{j\rightarrow\infty}\frac{r_{n_{2j+1}}(x,\beta)}{\varphi(n_{2j+1})}\geq \lim_{j\rightarrow\infty}\frac{\frac{1}{p}n_{2j+1}}{\varphi(n_{2j+1})}=+\infty,$$ by (\ref{lim1}).
\end{proof}
\subsection{Lower bound of $\dim_{\rm H}E_p$}
When it comes to the lower bound of  $\dim_{\rm H}E_p$, we technically show that given $\beta>1,$  $$\dim_{\rm H}E_p\geq \frac{\log \overline{\beta}}{\log \beta}\frac{p-1}{p}$$ for all $1<\overline{\beta}<\beta$. We start with introducing the following modified mass distribution principle (see \cite{BW}) which is of great importance to estimate the lower bound of the Hausdorff dimension of $E_p$. For more information of the mass distribution principle, readers can refer to \cite{FE}.
\begin{lemma}[Bugeaud and Wang \cite{BW}]\label{mp}
Let $\mu$ be the Borel measure and $E$ be a Borel measurable set with positive measure. Suppose that there exists a constant $c>0$ and an integer $N\geq 1$ satisfying that for any $n\geq N$ and basic interval of order $n$ containing $x\in E$ denoted by $I_n(x)$, we have $$\mu(I_n(x))\leq c|I_n(x)|^s.$$ Then, $\dim_{\rm H}E\geq s.$
\end{lemma}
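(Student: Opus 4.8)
\medskip
\noindent\textbf{Proof proposal.} This is the mass distribution principle specialised to the covering class of basic intervals, so I would run the classical scheme of \cite{FE}: for every $s'<s$ produce a lower bound for $\sum_i|U_i|^{s'}$ valid over all sufficiently fine covers $\{U_i\}$ of $E$, which by the definition of Hausdorff measure gives $\mathcal{H}^{s'}(E)>0$, hence $\dim_{\rm H}E\ge s'$; letting $s'\uparrow s$ then gives $\dim_{\rm H}E\ge s$. Two reductions cost nothing: replacing $\mu$ by its restriction to $E$ preserves the hypothesis and makes $\mu$ vanish off $E$, so assume this; and we may assume each cover element $U_i$ is an interval, replacing it by its convex hull (same diameter, still a cover). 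One may also assume $s\le1$, since $E\subseteq[0,1]$ and the argument below would otherwise produce $\mathcal{H}^{s'}(E)>0$ with $s'>1$.

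The key combinatorial fact is: if $|U|<\beta^{-n}$ then $U$ meets at most $n+2$ basic intervals of order $n$. Indeed, the basic intervals of order $n$ tile $(0,1]$ as consecutive intervals, so $U$ meets a consecutive block of them, at most two of which ``stick out'' of $U$ while the others lie inside $U$; if $n+1$ of them were contained in $U$ then, by Theorem \ref{no}, one of these $n+1$ consecutive order-$n$ basic intervals would be full, of length $\beta^{-n}>|U|$, which is absurd. This is exactly where the structure theory of Section 2 is indispensable: a single basic interval of order $n$ can be vastly shorter than $\beta^{-n}$, so a bound on their number coming merely from lengths would be hopeless; it is the abundance of full cylinders (Theorem \ref{no}) that makes it work.

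Fix $0<s'<s$. Since $t^{\,s-s'}\log_\beta(1/t)\to0$ as $t\to0^+$, choose $\delta\in(0,\beta^{-N})$ with $\bigl(\log_\beta(\beta/t)+2\bigr)\beta^{s}t^{\,s-s'}\le1$ for $0<t<\delta$, and let $\{U_i\}$ be any cover of $E$ with $\sup_i|U_i|<\delta$. For each $i$ with $U_i\cap E\ne\emptyset$, let $n_i$ satisfy $\beta^{-(n_i+1)}\le|U_i|<\beta^{-n_i}$, so $n_i\ge N$ and $n_i+2\le\log_\beta(\beta/|U_i|)+2$. By the combinatorial fact, $U_i$ meets at most $n_i+2$ basic intervals of order $n_i$; each such $I$ has $|I|\le\beta^{-n_i}\le\beta|U_i|$, and, since $\mu$ lives on $E$, $\mu(I)=\mu(I\cap E)$ equals $0$ when $I\cap E=\emptyset$ and is at most $c|I|^s\le c\beta^{s}|U_i|^s$ when $I\cap E\ne\emptyset$ (apply the hypothesis to $I=I_{n_i}(x)$ for any $x\in I\cap E$). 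Hence
\[
\mu(U_i)\le\sum_{\substack{I\ \text{of order }n_i\\ I\cap U_i\ne\emptyset}}\mu(I)\le\bigl(\log_\beta(\beta/|U_i|)+2\bigr)c\,\beta^{s}|U_i|^s\le c\,|U_i|^{s'}.
\]
Summing, $0<\mu(E)\le\sum_i\mu(U_i)\le c\sum_i|U_i|^{s'}$, so $\sum_i|U_i|^{s'}\ge\mu(E)/c$ for every such cover, giving $\mathcal{H}^{s'}(E)\ge\mu(E)/c>0$. Since $s'<s$ was arbitrary, $\dim_{\rm H}E\ge s$.

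The sole obstacle is the combinatorial count in the second paragraph; everything else is the standard mass distribution argument, with the one mild twist that one establishes $\dim_{\rm H}E\ge s$ via $\mathcal{H}^{s'}(E)>0$ for $s'<s$ rather than via $\mathcal{H}^{s}(E)>0$ directly, which is what lets the harmless logarithmic factor $\log_\beta(1/|U_i|)$ be absorbed.
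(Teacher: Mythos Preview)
The paper does not supply its own proof of this lemma: it is quoted verbatim from Bugeaud and Wang \cite{BW} and invoked as a black box. So there is no ``paper's proof'' to compare against, and your proposal stands or falls on its own.

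Your argument is correct. The essential difficulty beyond the textbook mass distribution principle is that a basic interval of order $n$ need not have length comparable to $\beta^{-n}$, so one cannot naively bound the number of order-$n$ cylinders meeting a set of diameter $\beta^{-n}$. You handle this with the right tool: Theorem~\ref{no} guarantees a full cylinder among any $n+1$ consecutive order-$n$ cylinders, which caps the count at $n+2$. The resulting logarithmic factor is then absorbed by dropping from exponent $s$ to $s'<s$, and the conclusion follows by letting $s'\uparrow s$. This is exactly the mechanism in \cite{BW}, so although the paper omits the proof, your write-up matches the intended one.

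Two cosmetic remarks. First, the aside about $s\le1$ is unnecessary: if the hypothesis held with $s>1$ and $\mu(E)>0$, your argument would force $\mathcal{H}^{s'}(E)>0$ for some $s'>1$, which is impossible for $E\subset[0,1]$; so the lemma is vacuous in that range and you need not treat it. Second, the restriction of $\mu$ to $E$ is harmless but also not really needed, since you only ever apply the hypothesis to cylinders $I_{n_i}(x)$ with $x\in E$, and cylinders disjoint from $E$ contribute nothing to $\mu(U_i\cap E)$ anyway.
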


For all $k\geq 2$, recall that $M_{d_k}$ and $D_k$ are defined as (\ref{M}) and (\ref{D}) respectively. Let $$a_k:=\sharp M_{d_k}$$ and $$b_k:=\sharp D_k.$$
\begin{lemma}\label{c}
Fixed $\beta>1,$ for each $1<\overline{\beta}<\beta$, there exist integers $k(\overline{\beta})$, $c(\overline{\beta})$ relying on $\overline{\beta}$ such that for every $k>k(\overline{\beta})$, $$a_k\geq \overline{\beta}^{d_k}$$ and $$b_k\geq c(\overline{\beta})\overline{\beta}^{p_k}$$ where $$p_k= \left\{
\begin{aligned}
n_{2j}-\frac{1}{p}\sum_{i=1}^j n_{2i-1}-\sum_{i=1}^{2j-1} d_i & , &{\rm when}\ \ k=2j,\ {\rm for some}\ j \in \N; \\
\frac{p-1}{p}n_{2j+1}-\frac{1}{p}\sum_{i=1}^j n_{2i-1}-\sum_{i=1}^{2j} d_i & , &{\rm when}\ \ k=2j+1,\ {\rm for some}\ j \in \N.\\
\end{aligned}
\right.$$
\end{lemma}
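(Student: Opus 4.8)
The plan is to establish the two lower bounds separately, first for $a_k$ and then bootstrapping to $b_k$. For the bound $a_k \geq \overline{\beta}^{d_k}$, I would combine Theorem \ref{no} with Theorem \ref{P}(2). By Theorem \ref{no}, among any $d_k+1$ consecutive basic intervals of order $d_k$ there is at least one full interval, so the number of full words of length $d_k$ is at least $\sharp\Sigma_\beta^{d_k}/(d_k+1) \geq \beta^{d_k}/(d_k+1)$. Restricting to those beginning with the digit $1$ costs at most a constant factor (one can instead count full words of the form $(1,0^{h-1})\ast \varepsilon'$ with $\varepsilon'$ ranging over full words of length $d_k-h$, using Remark \ref{co}(1) and the fullness of $I_h(1,0^{h-1})$ from Step I, which gives $a_k \geq \beta^{d_k-h}/(d_k-h+1)$). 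Since $\beta^{d_k-h}/(d_k-h+1) \geq \overline{\beta}^{d_k}$ for all large $k$ whenever $\overline{\beta}<\beta$ — because $(\beta/\overline{\beta})^{d_k}$ grows faster than any polynomial in $d_k$ and $d_k=\lfloor\log n_k\rfloor\to\infty$ — there is an index $k(\overline{\beta})$ beyond which the claimed inequality holds.

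For the bound on $b_k$, I would count recursively using the product structure of $D_k$. By definition $D_k$ is indexed by choices $(u_1,\dots,u_k)$ with $u_i\in G_i$, and each $G_i$ (for $i\geq 2$) consists of concatenations of $t_i$ words from $M_{d_{i-1}}$ followed by a fixed block of zeros, so $\sharp G_i = a_{i-1}^{t_i}$ and hence $b_k = \sharp G_1 \cdot \prod_{i=2}^k a_{i-1}^{t_i} = \prod_{i=2}^k a_{i-1}^{t_i}$ (with $\sharp G_1 = 1$). Applying $a_{i-1}\geq \overline{\beta}^{d_{i-1}}$ for $i-1 > k(\overline{\beta})$ (and absorbing the finitely many small indices into the constant $c(\overline{\beta})$), we get $b_k \geq c(\overline{\beta})\,\overline{\beta}^{\sum_{i} d_{i-1}t_i}$. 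It then remains to check that $\sum_{i=2}^k d_{i-1}t_i$ equals, up to the additive terms hidden in $p_k$, the exponent $p_k$ in the statement. This is a direct computation: since $t_{2j} = \lfloor (n_{2j}-n_{2j-1})/d_{2j-1}\rfloor$, we have $d_{2j-1}t_{2j} = n_{2j}-n_{2j-1} - r_{2j}$ with remainder $0\leq r_{2j} < d_{2j-1}$, and similarly $d_{2j}t_{2j+1} = \frac{p-1}{p}n_{2j+1} - n_{2j} - r_{2j+1}$ with $0\leq r_{2j+1} < d_{2j}$; summing telescopes the $n_{2i}$ terms and leaves exactly $n_{2j} - \frac1p\sum_{i=1}^j n_{2i-1} - \sum_{i<2j} d_i$ (resp. the odd-index expression), after noting the telescoping cancellations $-n_{2i-1}+n_{2i}\cdots$ and that the "kept" odd terms contribute $\frac{p-1}{p}n_{2i+1}$ while the full $n_{2i-1}$ gets subtracted inside the even step, leaving net $-\frac1p n_{2i-1}$ per pair.

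The main obstacle I anticipate is the bookkeeping in the last step: carefully verifying that the telescoping sum of the $d_{i-1}t_i$, after discarding floor-remainders bounded by $\sum d_i$, reproduces the piecewise formula for $p_k$ exactly — including getting the coefficient $\frac1p$ on the odd-indexed $n_{2i-1}$ right and confirming that all the discarded remainder terms are accounted for by the $-\sum d_i$ term in $p_k$ (so that no further loss is incurred and $\overline\beta^{p_k}$, rather than $\overline\beta^{p_k}$ times a shrinking factor, is a valid lower bound). A secondary technical point is handling the finitely many indices $i-1\leq k(\overline\beta)$ for which $a_{i-1}\geq\overline\beta^{d_{i-1}}$ may fail: since there are only finitely many such terms and each $a_{i-1}\geq 1$, their omission changes $b_k$ by at most a fixed multiplicative constant, which is absorbed into $c(\overline\beta)$, and one should also ensure $k(\overline\beta)$ is taken large enough that all indices appearing with the "full strength" bound genuinely exceed it.
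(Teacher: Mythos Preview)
Your proposal is correct and follows essentially the same route as the paper's proof: for $a_k$ you use the prefix $(1,0^{h-1})$ together with Theorem~\ref{no} and Theorem~\ref{P}(2) to count full words of length $d_k-h$ (exactly the paper's set $M'_{d_k}$), and for $b_k$ you exploit the product structure $b_k=\prod_{i=2}^k a_{i-1}^{t_i}$, bound each factor by $\overline{\beta}^{d_{i-1}t_i}$ for large indices, absorb the finitely many small indices into $c(\overline{\beta})$, and then telescope. The paper carries out the telescoping by grouping the exponent as $\sum_{i=1}^{j}(n_{2i}-n_{2i-1}-d_{2i-1})+\sum_{i=1}^{j}\bigl(\tfrac{p-1}{p}n_{2i-1}-n_{2i-2}-d_{2i-2}\bigr)$ (with the convention $n_0=d_0=0$, the $i=1$ odd term being absorbed into $c(\overline{\beta})$ along with the other small indices), which simplifies directly to $p_{2j}$; your remainder-based formulation is equivalent and your anticipated bookkeeping obstacle is exactly where the care is needed.
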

\begin{proof}
We first give the lower bound of $a_k$. Recall $h$ defined as (\ref{hh}), for any $k\geq 1,$ let $$M'_{d_k}=\{(\varepsilon_1,\ldots,\varepsilon_{d_k})\in \S_{\beta}^{d_k}:\ (\varepsilon_1,\ldots,\varepsilon_h)=(1,0^{h-1}){\rm\ and}\ I_{d_k-h}(\varepsilon_{h+1},\ldots,\varepsilon_{d_k})\ {\rm is\ full}\}.$$ Then, from the comparison of the definition of $M_{d_k}$ and $M'_{d_k}$, it holds that  $M'_{d_k}\subset M_{d_k}$ which implies that $\sharp M'_{d_k}\leq \sharp M_{d_k}$. Theorem \ref{P}(2) indicates that $$\sharp \S_{\beta}^{d_k-h}\geq \beta^{d_k-h}.$$ Furthermore, note that $\sharp M'_{d_k}$ is just the number of the full words in $\S_{\beta}^{d_k-h}$. Hence, by Theorem \ref{no}, we obtain that $$a_k\geq \sharp M'_{d_k}\geq \left\lfloor\frac{\beta^{d_k-h}}{d_k-h}\right\rfloor.$$ It follows that there exists an integer $k(\overline{\beta})$ depending on $ \overline{\beta}$ such that for every $k>k(\overline{\beta})$, we have $$\left\lfloor\frac{\beta^{d_k-h}}{d_k-h}\right\rfloor\geq \overline{\beta}^{d_k}.$$ Thus,$$a_k=\sharp M_{d_k}\geq \sharp M'_{d_k}\geq \overline{\beta}^{d_k}$$ for every $k\geq k(\overline{\beta}).$

Now we estimate $b_k$. For all $j\geq \lfloor\frac{k(\overline{\beta})}{2}\rfloor+1 \triangleq k'(\overline{\beta})$, by the construction of $G_{2j}$ and $G_{2j+1}$, we have $$\sharp G_{2j}=(\sharp M_{d_{2j-1}})^{t_{2j}}\geq \overline{\beta}^{d_{2j-1}t_{2j}}\geq \overline{\beta}^{n_{2j}-n_{2j-1}-d_{2j-1}},$$ and $$\sharp G_{2j+1}=(\sharp M_{d_{2j}})^{t_{2j+1}}\geq \overline{\beta}^{d_{2j}t_{2j+1}}\geq \overline{\beta}^{\frac{p-1}{p}n_{2j+1}-n_{2j}-d_{2j}}.$$ Then it follows from the relationship between $D_k$ and $G_k$ that for each $j\geq \lfloor\frac{k(\overline{\beta})}{2}\rfloor+1$,
\begin{equation}\label{v1}
\begin{aligned}
b_{2j}=\sharp D_{2j}=\prod_{i=1}^{2j}\sharp G_i\geq \prod_{i=k'(\overline{\beta})}^{2j}\sharp G_i&\geq \overline{\beta}^{\sum\limits_{i=k'(\overline{\beta})}^j(n_{2i}-n_{2i-1}-d_{2i-1})} \overline{\beta}^{\sum\limits_{i=k'(\overline{\beta})}^j(\frac{p-1}{p}n_{2i-1}-n_{2i-2}-d_{2i-2})}\\&\geq c(\overline{\beta})\overline{\beta}^{\sum\limits_{i=1}^j(n_{2i}-n_{2i-1}-d_{2i-1})} \overline{\beta}^{\sum\limits_{i=1}^j(\frac{p-1}{p}n_{2i-1}-n_{2i-2}-d_{2i-2})}\\&=c(\overline{\beta})\overline{\beta}^{n_{2j}- \frac{1}{p}\sum\limits_{i=1}^j n_{2i-1}-\sum\limits_{i=1}^{2j-1}d_i},
\end{aligned}
\end{equation}
where $$c(\overline{\beta})=\overline{\beta}^{-\left(\sum\limits_{i=1}^{k'(\overline{\beta})}(n_{2i}-n_{2i-1}-d_{2i-1})+ \sum\limits_{i=1}^{k'(\overline{\beta})}(\frac{p-1}{p}n_{2i-1}-n_{2i-2}-d_{2i-2})\right)}.$$ Here $c(\overline{\beta})$ is a constant depending on $\overline{\beta}$. The same way as (\ref{v1}) shows that,
\begin{equation}\label{v2}
\begin{aligned}
b_{2j+1}=\sharp D_{2j+1}=\prod_{i=1}^{2j+1}\sharp G_i &\geq c(\overline{\beta})\overline{\beta}^{\frac{p-1}{p}n_{2j+1}- \frac{1}{p}\sum\limits_{i=1}^j n_{2i-1}-\sum\limits_{i=1}^{2j}d_i}.
\end{aligned}
\end{equation}

Combining (\ref{v1}) and (\ref{v2}), the proof is finished.
\end{proof}

Now we give the lower bound of $\dim_{\rm H}E_p$ as the following result.
\begin{lemma}\label{h}
For each $p\in \N,\ p>1$. The Hausdorff dimension of $E_p$ satisfies that $$\dim_{\rm H}E_p\geq \frac{p-1}{p}.$$
\end{lemma}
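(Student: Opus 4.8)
The plan is to apply the modified mass distribution principle (Lemma \ref{mp}) to the set $E_p$, using the natural Cantor measure supported on it. First I would define a Borel probability measure $\mu$ on $E_p$ by distributing mass uniformly across the cylinders at each level: for $u=(u_1,\ldots,u_k)\in D_k$ set $\mu(J_u)=1/b_k$, where $b_k=\sharp D_k$. Since each $u_i$ ranges independently over $G_i$ and each $G_i$ is a product of copies of $M_{d_{i-1}}$ together with a fixed block of zeros, this is consistent (the mass of $J_u$ splits evenly among its children), so $\mu$ extends to a Borel measure on $E_p$ with $\mu(E_p)=1$. Because every word in $G_k$ is full, the intervals $J_u$ with $u\in D_k$ are genuine basic intervals of order $n_k$ and $|J_u|=\beta^{-n_k}$; moreover any two distinct level-$k$ cylinders are disjoint, so $\mu$ is well-defined on the algebra generated by the $J_u$'s.

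Next I would estimate $\mu(I_n(x))$ for an arbitrary basic interval of order $n$ meeting $E_p$, distinguishing two regimes. When $n=n_k$ is one of the marker levels, $I_{n_k}(x)=J_u$ for some $u\in D_k$, so $\mu(I_{n_k}(x))=1/b_k\leq c(\overline\beta)^{-1}\overline\beta^{-p_k}$ by Lemma \ref{c}, while $|I_{n_k}(x)|=\beta^{-n_k}$. Comparing exponents and recalling that $p_k$ differs from $\frac{p-1}{p}n_k$ (resp. $n_k$ in the even case, which is even larger) only by the lower-order terms $\frac1p\sum n_{2i-1}+\sum d_i$, which are $o(n_k)$ by \eqref{lim2} and $d_i=\lfloor\log n_i\rfloor$, one gets $p_k\geq (\frac{p-1}{p}-\eta_k)n_k$ with $\eta_k\to0$. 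Hence $\mu(I_{n_k}(x))\leq c(\overline\beta)^{-1}|I_{n_k}(x)|^{\frac{\log\overline\beta}{\log\beta}(\frac{p-1}{p}-\eta_k)}$. For a general order $n$ with $n_{k}\le n< n_{k+1}$, I would bound $\mu(I_n(x))$ by the mass of the level-$k$ cylinder containing it divided by the number of level-$(k{+}1)$ descendants that are not yet separated at level $n$; since within $G_{k+1}$ the branching happens in blocks of length $d_k$ coming from $M_{d_k}$ (each contributing a factor $\geq\overline\beta^{d_k}$ by Lemma \ref{c}), passing from level $n_k$ to level $n$ multiplies the number of surviving cylinders by at least $\overline\beta^{\,n-n_k-O(d_k)}$, so $\mu(I_n(x))\leq c'(\overline\beta)\,\overline\beta^{-(n-n_k)}\cdot b_k^{-1}$, and combining with the $n_k$-estimate yields $\mu(I_n(x))\leq C\,\beta^{-n\cdot\frac{\log\overline\beta}{\log\beta}(\frac{p-1}{p}-\eta_n)}=C|I_n(x)|^{s_n}$ with $s_n\to\frac{\log\overline\beta}{\log\beta}\frac{p-1}{p}$.

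Then, fixing any $s<\frac{\log\overline\beta}{\log\beta}\frac{p-1}{p}$, for $n$ large enough we have $s_n\ge s$, so Lemma \ref{mp} applies and gives $\dim_{\rm H}E_p\ge s$; letting $s\uparrow\frac{\log\overline\beta}{\log\beta}\frac{p-1}{p}$ and then $\overline\beta\uparrow\beta$ (so $\frac{\log\overline\beta}{\log\beta}\uparrow 1$) yields $\dim_{\rm H}E_p\ge\frac{p-1}{p}$, which is the claim. The main obstacle I anticipate is the bookkeeping in the intermediate-level estimate: one must carefully track, for $n_k<n<n_{k+1}$, exactly how many full sub-blocks from $M_{d_k}$ have been ``completed'' by position $n$ and absorb the at most $2d_k$ extra symbols at the boundary (the same $\le 2d_{2j-1}$ type bound already used in Lemma \ref{sub}) and the trailing zero-block, so that the per-symbol growth rate of the cylinder count is still $\overline\beta$ up to a bounded multiplicative error; the asymptotic negligibility of $\frac1p\sum_{i}n_{2i-1}+\sum_i d_i$ relative to $n_k$, which makes the error exponents $\eta_n$ vanish, is where condition \eqref{lim2} ($n_k\ge\varphi(n_k)\ge kn_{k-1}$) and $d_i=\lfloor\log n_i\rfloor$ are used, and this should be checked cleanly before invoking the mass distribution principle.
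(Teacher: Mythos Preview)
Your overall strategy matches the paper's: define the uniform Cantor measure $\mu(J_u)=1/b_k$ on level-$k$ cylinders, estimate $\mu(I_n(x))$ against $|I_n(x)|$ for every $n$, and invoke Lemma~\ref{mp}; the marker-level bound via Lemma~\ref{c} and the passage $\overline\beta\uparrow\beta$ are also exactly as in the paper.

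There is, however, a genuine error in your intermediate-level estimate. You assert that ``passing from level $n_k$ to level $n$ multiplies the number of surviving cylinders by at least $\overline\beta^{\,n-n_k-O(d_k)}$,'' and that the trailing zero-block is absorbed into a ``bounded multiplicative error.'' This fails when $k=2j$ is even. In $G_{2j+1}$ there are only
\[
t_{2j+1}=\Big\lfloor\frac{\tfrac{p-1}{p}n_{2j+1}-n_{2j}}{d_{2j}}\Big\rfloor
\]
sub-blocks from $M_{d_{2j}}$, followed by a zero-tail of length $n_{2j+1}-n_{2j}-d_{2j}t_{2j+1}\approx \tfrac{1}{p}n_{2j+1}$, which is a \emph{positive proportion} of $n_{2j+1}$. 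Once $n$ enters this tail there is no further branching, so the number of surviving level-$(2j{+}1)$ cylinders stays at $a_{2j}^{t_{2j+1}}\approx\overline\beta^{\,(p-1)n_{2j+1}/p-n_{2j}}$, not $\overline\beta^{\,n-n_{2j}}$; the gap is a factor of order $\overline\beta^{\,n_{2j+1}/p}$, certainly not bounded. Hence your inequality $\mu(I_n(x))\le c'(\overline\beta)\,\overline\beta^{-(n-n_k)}/b_k$ is simply false in this range. Your ``at most $2d_k$'' boundary slack is valid only for the even blocks $G_{2j}$, where $t_{2j}=\lfloor(n_{2j}-n_{2j-1})/d_{2j-1}\rfloor$ and the tail is indeed shorter than $d_{2j-1}$.

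This long zero-tail is precisely where the exponent $(p-1)/p$ comes from, so the gap is not cosmetic. The paper fixes it by splitting $n_k<n\le n_{k+1}$ into four cases (parity of $k$; branching segment versus zero-tail). In the critical case (the paper's Case~2, $n_{2j}+t_{2j+1}d_{2j}\le n<n_{2j+1}$) one uses
\[
\mu(I_n(x))=\mu(I_{n_{2j+1}}(x))\le c(\overline\beta)^{-1}\overline\beta^{-p_{2j+1}},\qquad |I_n(x)|\ge \beta^{-n_{2j+1}},
\]
and since $p_{2j+1}/n_{2j+1}\to(p-1)/p$ this gives $\dfrac{\log\mu(I_n(x))}{\log|I_n(x)|}\ge \dfrac{p-1}{p}\,\dfrac{\log\overline\beta}{\log\beta}-o(1)$. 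Once you replace your single ``per-symbol growth rate $\overline\beta$'' claim by this case split, the rest of your argument goes through exactly as you outlined.
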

\begin{proof}
It suffices to show that $$\dim_{\rm H}E_p\geq \frac{p-1}{p}\frac{\log\overline{\beta}}{\log\beta}$$ for all $1<\overline{\beta}<\beta$. To complete our proof, it falls naturally into three parts.

(1) Distribute a probability measure $\mu$ supported on $E_p$. Let $$\mu([0,1])=1,\ {\rm and}\ \mu(I_{n_1}(u))=1,\ u \in D_1.$$ For all $k\geq 1,$ and $u=(u_1,\ldots,u_{k+1})\in D_{k+1}$, we set
\begin{equation}\label{mu}
\mu(I_{n_{k+1}}(u))=\frac{\mu(I_{n_k}(u_1,\ldots,u_k))}{\sharp G_k}.
\end{equation}Then define $\mu(I_{n}(x))$ for all $n_k< n<n_{k+1}$ and $x\in E_p$ as $$\mu(I_{n}(x))=\sum_{u \in D_{k+1},I_{n_{k+1}}(u)\subset I_n(x)} \mu(I_{n_{k+1}}(u)).$$  For every $x\notin E_p$, let $\mu(I_{n}(x))=0$. The Kolmogorov's consistency theorem guarantees that $\mu$ we defined above can be uniquely extended to a Borel measure supported on $E_p$.

(2) Estimate $\frac{\log \mu(I_n(x))}{\log|I_n(x)|}$ for all $x\in E_p,\ n\geq 1$. By (\ref{mu}) and Lemma \ref{c}, we get that
\begin{equation}\label{m1}
\mu(I_{n_i}(x))=\frac{1}{b_i}\leq \frac{1}{c(\overline{\beta})\overline{\beta}^{p_i}},
\end{equation}for every $i>k(\overline{\beta}),$ where $k(\overline{\beta})$ is an integer depending on $\overline{\beta}$ given in Lemma \ref{c}. For $n\geq 1$, there exists $k\geq 0$ such that $n_k< n\leq n_{k+1}$. Then, we distinguish four cases to get the lower bound of  $\mu(I_n(x))$ for all $x\in E_p,\ n\geq 1$.

Case 1. $k=2j$ and $n_{2j}+\ell d_{2j-1}\leq n<n_{2j}+(\ell+1)d_{2j-1}$ for some $0\leq \ell \leq t_{2j}-1$. Notice that the number of $I_n(x)$ containing $I_{n_{2j+1}}(u)(u\in D_{2j+1})$ is larger than $a_{2j-1}^{\ell}$. Then $$\mu(I_n(x))\leq \mu(I_{n_{2j}+\ell d_{2j-1}}(x))\leq \mu(I_{n_{2j}}(x))a_{2j-1}^{-\ell}\leq \frac{1}{c(\overline{\beta})\overline{\beta}^{p_{2j}}\overline{\beta}^{d_{2j-1}\ell}},$$ where the last inequality follows from (\ref{m1}) and Lemma \ref{c}. Moreover, Theorem \ref{AB}(3) implies that$$|I_n(x)|\geq |I_{n_{2j}+(\ell+1)d_{2j}}(x)|=\frac{1}{\beta^{n_{2j}+(\ell+1)d_{2j}}}.$$
Consequently, $$\frac{\log \mu(I_n(x))}{\log|I_n(x)|}\geq \frac{\log \overline{\beta}^{p_{2j}+d_{2j-1}\ell}+\log c(\overline{\beta})}{ \log \beta^{n_{2j}+(\ell+1)d_{2j}}}.$$ By Lemma \ref{c}, we obtain that $$\lim_{j\rightarrow\infty} \frac{\log \overline{\beta}^{p_{2j}+d_{2j-1}\ell}+\log c(\overline{\beta})}{ \log \beta^{n_{2j}+(\ell+1)d_{2j}}}=\frac{\log \overline{\beta}}{\log \beta}.$$

Case 2. $k=2j$ and $n_{2j}+t_{2j} d_{2j-1}\leq n<n_{2j+1}$. Then $$\mu(I_n(x))= \mu(I_{n_{2j+1}}(x))\leq \frac{1}{c(\overline{\beta})\overline{\beta}^{p_{2j+1}}},$$by (\ref{m1}) and Lemma \ref{c}. Moreover, Theorem \ref{AB}(3) forces that$$|I_n(x)|\geq |I_{n_{2j+1}}(x)|=\frac{1}{\beta^{n_{2j+1}}}.$$
Hence, $$\frac{\log \mu(I_n(x))}{\log|I_n(x)|}\geq \frac{\log \overline{\beta}^{p_{2j+1}}+\log c(\overline{\beta})}{ \log \beta^{n_{2j+1}}}.$$ By Lemma \ref{c}, $$\lim_{j\rightarrow\infty} \frac{\log \overline{\beta}^{p_{2j+1}}+\log c(\overline{\beta})}{ \log \beta^{n_{2j+1}}}=\frac{p-1}{p}\frac{\log \overline{\beta}}{\log \beta}.$$

Case 3. $k=2j+1$ and $n_{2j+1}+\ell d_{2j}\leq n<n_{2j+1}+(\ell+1)d_{2j}$ for some $0\leq \ell \leq t_{2j}-1$. Similar to Case 1,  it follows from (\ref{m1}) and Lemma \ref{c} that $$\mu(I_n(x))\leq \mu(I_{n_{2j+1}+\ell d_{2j}}(x))\leq \mu(I_{n_{2j+1}}(x))a_{2j}^{-\ell}\leq \frac{1}{c(\overline{\beta})\overline{\beta}^{p_{2j+1}}\overline{\beta}^{d_{2j}\ell}}.$$We further get from Theorem  \ref{AB}(3) that $$|I_n(x)|\geq |I_{n_{2j+1}+(\ell+1)d_{2j+1}}(x)|=\frac{1}{\beta^{n_{2j+1}+(\ell+1)d_{2j+1}}}.$$
As a consequence, $$\frac{\log \mu(I_n(x)}{\log|I_n(x)|}\geq \frac{\log \overline{\beta}^{p_{2j+1}+d_{2j}\ell}+\log c(\overline{\beta})}{ \log \beta^{n_{2j+1}+(\ell+1)d_{2j+1}}}.$$ By Lemma \ref{c}, we have $$\lim_{j\rightarrow\infty} \frac{\log \overline{\beta}^{p_{2j+1}+d_{2j}\ell}+\log c(\overline{\beta})}{ \log \beta^{n_{2j+1}+(\ell+1)d_{2j+1}}}=\frac{p-1}{p}\frac{\log \overline{\beta}}{\log \beta}.$$

Case 4. $k=2j+1$ and $n_{2j+1}+t_{2j+1} d_{2j}\leq n<n_{2j+2}$. Analogously, (\ref{m1}) and Lemma \ref{c} provide that$$\mu(I_n(x))= \mu(I_{n_{2j+2}}(x))\leq \frac{1}{c(\overline{\beta})\overline{\beta}^{p_{2j+2}}}.$$ Furthermore, Theorem \ref{AB}(3) gives that $$|I_n(x)|\geq |I_{n_{2j+2}}(x)|=\frac{1}{\beta^{n_{2j+2}}}.$$
Therefore, $$\frac{\log \mu(I_n(x)}{\log|I_n(x)|}\geq \frac{\log \overline{\beta}^{p_{2j+2}}+\log c(\overline{\beta})}{ \log \beta^{n_{2j+2}}}.$$ Lemma \ref{c} indicates that, $$\lim_{j\rightarrow\infty} \frac{\log \overline{\beta}^{p_{2j+2}}+\log c(\overline{\beta})}{ \log \beta^{n_{2j+2}}}=\frac{\log \overline{\beta}}{\log \beta}.$$

(3) Use the modified mass distribution principle (Lemma \ref{mp} ) to get the lower bound of $\dim_{\rm H}E_p$. By the discussion of the above four cases in (2), we immediately get that, for every $\eta>0$, there exits an integer $n_0$ such that for every $n\geq n_0$ and $x\in E_p$, we obtain that $$\mu(I_n(x))\leq |I_n(x)|^{\frac{p-1}{p}\frac{\log\overline{\beta}}{\log\beta}-\eta}.$$ Thus, it results from Lemma \ref{mp} that $$\dim E_p \geq \frac{p-1}{p}\frac{\log\overline{\beta}}{\log\beta}-\eta.$$The arbitrariness of $\eta>0$ and $1<\overline{\beta}<\beta$ demonstrates that $$\dim E_p \geq \frac{p-1}{p}.$$
\end{proof}
\subsection{Hausdorff dimension of $E_{\max}^{\varphi}$}
From the discussion above, the remainder of this section will be devoted to the proof of Theorem \ref{th1}.

\textbf{Proof of Theorem \ref{th1}} Applying Lemma \ref{sub}, it holds that $E_p\subset E_{\max}^{\varphi}$ for every $p\in \N,\ p>1$. By setting $p\rightarrow\infty$, we get that $$\dim_{\rm H} E_{\max}^{\varphi}\geq \lim_{p\rightarrow\infty} \dim_{\rm H} E_p\geq \lim_{p\rightarrow\infty} \frac{p-1}{p}=1$$ where the second inequality follows from Lemma \ref{h}. It is obvious that $\dim_{\rm H} E_{\max}^{\varphi}\leq1$. Thus, $$\dim_{\rm H} E_{\max}^{\varphi}=1.$$$\hfill\Box$

\section{Proof of Theorem \ref{th2}}
Now we turn towards the topological property of the set $E_{\max}^{\varphi}$. Due to the Baire category theorem, we just need to construct a set $U \subset [0,1]$ verifying the following conditions:

(1) $U \subset E_{\max}^{\varphi}$;

(2) $U$ is dense in $[0,1]$;

(3) $U$ is a $G_\d$ set.

Before putting the proof of Theorem \ref{th2}, we devote to constructing a set $U$ with the desired properties.  For every integer $n\geq 1,$ let $\Gamma_n$ be defined as (\ref{g}) and $h$ be given as (\ref{hh}). Since  $\limsup\limits_{n\rightarrow \infty}\frac{n}{\varphi(n)}=+\infty$ and $\varphi(n)\rightarrow +\infty$ as $n\rightarrow +\infty$, we can choose a increasing subsequence $\{n_i\}_{i\geq 1}\subset \N$ satisfying
\begin{equation}\label{lim3}
\lim\limits_{i\rightarrow \infty}\frac{n_i}{\varphi(n_i)}=+\infty
\end{equation}with $n_i-n_{i-1}>\max\{2h, i+\G_i\}$ and $\varphi(n_i)\geq (i-1)n_{i-1}$. Fix $(\varepsilon_1,\ldots,\varepsilon_k) \in \Sigma_{\beta}^k$, let
\begin{equation}\label{om}
\omega^{(k)}_i=\left\{
\begin{aligned}
(1,0^{n_{k+i}-n_{k+i-1}-1}) & , &{\rm when}\ \ i{\rm\ is\ odd}; \\
\left((1,0^{h-1})^{\lfloor \frac{n_{k+i}-n_{k+i-1}}{h}\rfloor h},0^{n_{k+i}-n_{k+i-1}-\lfloor \frac{n_{k+i}-n_{k+i-1}}{h}\rfloor h}\right) & , &{\rm when}\ \ i {\rm\ is\ even}.\\
\end{aligned}
\right.
\end{equation} for every $1\leq i\leq 2k$.
Now we define $$U:=\bigcap_{n=1}^{\infty} \bigcup_{k=n}^{\infty} \bigcup_{(\varepsilon_1,\ldots,\varepsilon_k) \in \Sigma_{\beta}^k}{\rm int} \left(I_{n_{3k}}(\varepsilon_1,\ldots,\varepsilon_k ,0^{n_k-k},\omega^{(k)}_1,\ldots,\omega^{(k)}_{2k}\right),$$ where ${\rm int}(I_{|\varepsilon|}(\varepsilon))$ denotes the interior of $I_{|\varepsilon|}(\varepsilon)$ for every $\varepsilon \in \S^\ast_\beta$.

\begin{remark}
$U$ is well defined. This is because, for all $(\varepsilon_1,\ldots,\varepsilon_k)\in \Sigma_{\beta}^k$, it follows from Remark \ref{co}(3) that the interval $I_{k+\G_k+1}(\varepsilon_1,\ldots,\varepsilon_k,0^{\G_k+1})$ is full. Since  $n_k>k+\G_k$ by the choice of $n_k$, we have the basic interval $I_{n_{k}}(\varepsilon_1,\ldots,\varepsilon_k ,0^{n_k-k})$ is full by  Remark \ref{co}(2). So the word $(\varepsilon_1,\ldots,\varepsilon_k ,0^{n_k-k})$ can concatenate any $\b$-admissible word by Theorem \ref{AB}(1). Similarly, $\omega^{(k)}_i(1\leq i \leq 2k)$ can concatenate any admissible word by the choice of $n_k$ satisfying $n_k-n_{k-1}>2h$ for all $k\geq 2$ and it is full for each $k \geq 1$ by Remark \ref{co}(2).
\end{remark}

It is obvious that $U$ is a $G_{\d}$ set since ${\rm int}(I_{|\varepsilon|}(\varepsilon))$ is open for all $\varepsilon \in \S^\ast_\beta$. So it remains to show that $U$ is a subset of $E_{\max}^{\varphi}$ and is dense in $[0,1]$.
\begin{lemma}\label{le2}
$U \subset E_{\max}^{\varphi}$.
\end{lemma}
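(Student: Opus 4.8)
\textbf{Proof proposal for Lemma \ref{le2}.} The plan is to take an arbitrary $x\in U$ and verify the two defining conditions of $E_{\max}^{\varphi}$, namely $\liminf_{n\to\infty}\frac{r_n(x,\beta)}{\varphi(n)}=0$ and $\limsup_{n\to\infty}\frac{r_n(x,\beta)}{\varphi(n)}=+\infty$. By construction, $x\in U$ means that for infinitely many $k$ there is an admissible word $(\varepsilon_1,\dots,\varepsilon_k)\in\Sigma_\beta^k$ such that $x$ lies in the interior of the basic interval $I_{n_{3k}}(\varepsilon_1,\dots,\varepsilon_k,0^{n_k-k},\omega^{(k)}_1,\dots,\omega^{(k)}_{2k})$; in particular the $\beta$-expansion of $x$ begins with this prescribed block. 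I would fix one such $k$ (and later let $k\to\infty$ along the infinite set of indices for which this holds) and read off what the digits of $x$ look like on the coordinates up to $n_{3k}$.

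The key computation is to locate, within this prescribed prefix, both very long and very short runs of zeros relative to $\varphi$. For the $\limsup$: when $i$ is odd, $\omega^{(k)}_i=(1,0^{n_{k+i}-n_{k+i-1}-1})$ contributes a run of exactly $n_{k+i}-n_{k+i-1}-1$ consecutive zeros, which ends at coordinate $n_{k+i}$. Hence $r_{n_{k+i}}(x,\beta)\ge n_{k+i}-n_{k+i-1}-1$, and so, dividing by $\varphi(n_{k+i})$ and using $\varphi(n_i)\ge (i-1)n_{i-1}$ together with (\ref{lim3}), I would get $\frac{r_{n_{k+i}}(x,\beta)}{\varphi(n_{k+i})}\to+\infty$ along odd $i$ (after sending $k$, hence the index $k+i$, to infinity); this forces $\limsup_{n\to\infty}\frac{r_n(x,\beta)}{\varphi(n)}=+\infty$. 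For the $\liminf$: when $i$ is even, $\omega^{(k)}_i$ consists of repeated blocks $(1,0^{h-1})$ followed by fewer than $h$ trailing zeros, so inside $\omega^{(k)}_i$ the longest run of zeros has length at most $h-1$ plus whatever short tail occurs, i.e.\ $O(h)$. The point is to bound $r_{n_{k+i}}(x,\beta)$ for even $i$ by the maximum over (a) the run carried over from the \emph{previous} odd block $\omega^{(k)}_{i-1}$, which is at most $n_{k+i-1}-n_{k+i-2}-1<n_{k+i-1}$, and (b) the short runs inside $\omega^{(k)}_i$, which are $O(h)$; so $r_{n_{k+i}}(x,\beta)< n_{k+i-1}$ for large $k$. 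Dividing by $\varphi(n_{k+i})\ge (k+i-1)n_{k+i-1}$ gives $\frac{r_{n_{k+i}}(x,\beta)}{\varphi(n_{k+i})}< \frac{1}{k+i-1}\to 0$, whence $\liminf_{n\to\infty}\frac{r_n(x,\beta)}{\varphi(n)}=0$.

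The main obstacle — really the only subtle point — is the bookkeeping for the even-$i$ upper bound on $r_{n_{k+i}}(x,\beta)$: one must be careful that no long run of zeros is accidentally created at the \emph{junctions} between consecutive $\omega$-blocks, or between the prefix $(\varepsilon_1,\dots,\varepsilon_k,0^{n_k-k})$ and $\omega^{(k)}_1$, or inside the initial $0^{n_k-k}$ padding. The block $0^{n_k-k}$ itself has length $n_k-k$, which is large, but it is followed (when $i=1$ is odd) by another run, so the relevant run for the $\limsup$ statement is fine; for the $\liminf$ one only evaluates $r_n$ at $n=n_{k+i}$ with $i$ even, and at such an index the last long zero-run seen is the one ending at $n_{k+i-1}$ (odd index $i-1$), which is $<n_{k+i-1}$, while everything in $\omega^{(k)}_i$ itself is short because every maximal zero-gap there is broken by a $1$ at least every $h$ symbols. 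One should also note that each $\omega^{(k)}_i$ and each prefix block is a full word (as recorded in the Remark following the definition of $U$), so these concatenations are genuinely admissible and the digits of $x$ really are the concatenated word; this legitimizes reading off $r_{n_{k+i}}(x,\beta)$ directly from the construction. Finally one invokes the standing hypothesis $\limsup_{n\to\infty}\frac{n}{\varphi(n)}=+\infty$ only through the chosen subsequence $\{n_i\}$ via (\ref{lim3}), which is exactly what makes the $\limsup$ blow up.
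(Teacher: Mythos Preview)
Your proposal is correct and follows the same approach as the paper: the paper simply evaluates the quotient at the specific indices $n=n_{3k-1}$ (the last odd block, for the $\limsup$) and $n=n_{3k}$ (the last even block, for the $\liminf$), which are exactly your $n_{k+i}$ with $i=2k-1$ and $i=2k$. One small point to tighten in your $\liminf$ bound: $r_{n_{k+i}}(x,\beta)$ records the longest zero-run over \emph{all} of the first $n_{k+i}$ digits, so you must also account for the earlier odd blocks $\omega^{(k)}_1,\omega^{(k)}_3,\ldots$ and the initial padding $0^{n_k-k}$, not just $\omega^{(k)}_{i-1}$; but since position $n_{k+i-1}+1$ is a $1$ (the first symbol of $\omega^{(k)}_i$), every such run is contained in the first $n_{k+i-1}$ digits and hence has length at most $n_{k+i-1}$, so your inequality $r_{n_{k+i}}(x,\beta)<n_{k+i-1}$ survives unchanged.
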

\begin{proof}
For every $x \in U,$  by the construction of $U$, there exist infinitely many $k$, such that the $\beta$-expansion of $x$ starts with $(\varepsilon_1,...,\varepsilon_k,0^{n_k-k},\w^{(k)}_1,\ldots,\w^{(k)}_{2k})$, where $(\varepsilon_1,...,\varepsilon_k)\in \Sigma_{\beta}^k$ and $\w^{(k)}_i$ is defined as (\ref{om}) for all $1\leq i\leq 2k$. Now we are concentrating on finding out the super limit and lower limit of $\frac{r_n(x,\beta)}{\varphi(n)}$.

When $n=n_{3k}$, the construction of $U$ gives that the maximal length of zeros can only appear in the tail of $\omega^{(k)}_i(1\leq i\leq 2k)$ defined as (\ref{om}). Moreover, $n_k$ is increasing as $k$ increases. Consequently, it comes to the conclusion that, for large enough $k$, $$r_{n_{3k}}(x,\beta)\leq \max\{k+\G_k,2h,n_{3k-1}-n_{3k-2}\}\leq n_{3k-1}.$$ Thus, we have $$\liminf_{n\rightarrow \infty} \frac{r_n(x,\beta)}{\varphi(n)}\leq \liminf_{k\rightarrow \infty}\frac{r_{n_{3k}}(x,\beta)}{\varphi(n_{3k})}\leq \liminf_{k\rightarrow \infty}\frac{n_{3k-1}}{\varphi(n_{3k})}\leq  \lim_{k\rightarrow \infty}\frac{n_{3k-1}}{(3k-1)n_{3k-1}}=0.$$

When $n=n_{3k-1}$, by the observation on $U$, there are at least $n_{3k-1}-n_{3k-2}$ zeros in $\omega^{(k)}_{2k-1}$ which is defined as (\ref{om}). We therefore obtain that $$r_{n_{3k-1}}(x,\beta)\geq n_{3k-1}-n_{3k-2}.$$ As a result, we get $$\limsup_{n\rightarrow \infty} \frac{r_n(x,\beta)}{\varphi(n)}\geq \limsup_{k\rightarrow \infty}\frac{r_{n_{3k-1}}(x,\beta)}{\varphi(n_{2k-1})}\geq \lim_{k\rightarrow \infty}\frac{n_{3k-1}-n_{3k-2}}{\varphi(n_{3k-1})}= +\infty.$$

In conclusion, it immediately holds that $x\in E_{\max}^{\varphi}$, so $U \subset E_{\max}^{\varphi}$.
\end{proof}

\textbf{Proof of Theorem \ref{th2}}  We first check that the set $$\bigcup_{(\varepsilon_1,\ldots,\varepsilon_k) \in \Sigma_{\beta}^k}{\rm int} \left(I_{n_{3k}}(\varepsilon_1,\ldots,\varepsilon_k ,0^{n_k-k},\omega^{(k)}_1,\ldots,\omega^{(k)}_{2k}\right)$$ is dense in [0,1]. That is, for all real number $x\in [0,1]$ and $r > 0$, we need to find out a real number $y\in U$ satisfying $|x-y|\leq r.$ Assume that the $\b$-expansion of $x$ is $\varepsilon(x,\b)=(\varepsilon_1(x),\varepsilon_2(x),\ldots)$. Let $\ell$ be an integer such that $\b^{-\ell} \leq r$. We get that $(\varepsilon_1(x),\ldots,\varepsilon_\ell(x)) \in \S^\ell_\beta$. Then let $$y\in I_{3n_\ell}(\varepsilon_1(x),\ldots,\varepsilon_\ell(x),0^{n_\ell-\ell},\w^{(\ell)}_1,...,\w^{(\ell)}_{2\ell})$$ where $\omega^{(\ell)}_{i}$ is defined as (\ref{om}) for all $1\leq i\leq 2\ell$. Thus $$|x-y|\leq \b^{-\ell} \leq r$$ since both the $\beta$-expansions of $x$ and $y$ begin with $(\varepsilon_1(x),...,\varepsilon_\ell(x))$. Hence, the set $$\bigcup_{(\varepsilon_1,\ldots,\varepsilon_k) \in \Sigma_{\beta}^k}{\rm int} \left(I_{n_{3k}}(\varepsilon_1,\ldots,\varepsilon_k ,0^{n_k-k},\omega^{(k)}_1,\ldots,\omega^{(k)}_{2k}\right)$$ is dense in $[0,1]$.

By the Baire category theorem, we consequently have $U$ is residual in $[0,1]$. To sum up, $E_{\max}^{\varphi}$ is residual in $[0,1]$ by Lemma \ref{le2}.

$\hfill\Box$

{\bf Acknowledgement}
This work was supported by NSFC 11371148, 11411130372 and 11601161, Guangdong Natural Science Foundation 2014A030313230, and "Fundamental Research Funds for the Central Universities" SCUT 2015ZZ055.

\end{document}